\documentclass[11pt]{article}

\usepackage[margin=25mm]{geometry}
\usepackage{amsmath,amssymb,amsthm,mathtools}
\usepackage{booktabs}
\usepackage{enumerate}
\usepackage{xcolor}
\usepackage{kbordermatrix}

\usepackage{hyperref}

\usepackage{cleveref}

\def\wtd{\widetilde}
\def\what{\widehat}

\usepackage{amsbsy}
\def\ba{\pmb{a}}
\def\bb{\pmb{b}}

\def\bu{\pmb{u}}
\def\bv{\pmb{v}}

\def\bx{\pmb{x}}
\def\by{\pmb{y}}

\def\blambda{\pmb{\lambda}}
\def\bgamma{\pmb{\gamma}}
\def\bsigma{\pmb{\sigma}}

\DeclareMathOperator{\rmd}{d}

\def\sss{\scriptstyle}

\newtheorem{theorem}{Theorem}[section]
\newtheorem{lemma}{Lemma}[section]

\theoremstyle{definition}
\newtheorem{definition}{Definition}[section]

\newtheorem{example}{Example}[section]

\newcommand{\bbC}{\mathbb{C}}
\newcommand{\bbR}{\mathbb{R}}

\DeclareMathOperator{\diag}{diag}

\DeclareMathOperator{\eig}{eig}
\DeclareMathOperator{\gap}{gap}
\DeclareMathOperator{\rank}{rank}
\DeclareMathOperator{\sv}{sv}
\DeclareMathOperator{\tr}{tr}
\DeclareMathOperator{\UI}{ui}

\DeclareMathOperator{\F}{F}
\DeclareMathOperator{\T}{T}

\newcommand{\Rea}{\operatorname{Re}}

\numberwithin{equation}{section}

\title{A Sharp Unitarily Invariant Norm Bound for the Off-Diagonal\\
       Block Perturbation of a Hermitian Matrix}

\author{
Lei-Hong Zhang%
\thanks{School of Mathematical Sciences and Institute of Computational Science, Soochow University, Suzhou 215006, Jiangsu, China. Email: {\tt longzlh@suda.edu.cn}.
             Supported in part by the National Natural Science Foundation of China
             NSFC-11671246 and NSFC-12071332.}
\and
Ren-Cang Li%
\thanks{Department of Mathematics,
University of Texas at Arlington, Arlington, TX 76019-0408, USA.
Email: {\tt rcli@uta.edu.}}
}

\date{August 29, 2026}

\begin{document}
\maketitle

\begin{abstract}
Let
$$
A=\begin{bmatrix} H_1 & E^* \\ E & H_2 \end{bmatrix}
\quad\text{and}\quad
\widetilde A=\begin{bmatrix} H_1 & 0 \\ 0 & H_2 \end{bmatrix}
$$
be two partitioned Hermitian matrices, where $\widetilde A$ is obtained from $A$ by simply dropping the off-diagonal blocks, and let $\eta$ be the gap between the spectra $\eig(H_1)$ of $H_1$ and $\eig(H_2)$  of $H_2$. Define, for $\delta\ge 0$ and $\epsilon\ge 0$,
$$
\phi(\delta,\epsilon)=
\begin{cases}
  2\epsilon/(\delta+\sqrt{\delta^2+4\epsilon^2}), &\quad\mbox{if $(\delta,\epsilon)\ne (0,0)$}, \\
  1, &\quad\mbox{if $(\delta,\epsilon) = (0,0)$},
\end{cases}
$$
and let $V=A-\widetilde A$ and $\epsilon_2=\|E\|_2=\|V\|_2$, the matrix spectral norm.
Li and Li [{\em Linear Algebra Appl.}, 395:183--190, 2005] established a sharp spectral-norm bound on the changes in the eigenvalues of $A$:
$$
\big\|\diag\big(\pmb{\lambda}(A)-\pmb{\lambda}(\widetilde A)\big)\big\|_2
\le \phi(\eta,\epsilon_2)\,\|E\|_2,
$$
where $\pmb{\lambda}(A)$ is the vector whose components are the eigenvalues of $A$ in descending order
and similarly for $\pmb{\lambda}(\widetilde A)$.
The goal of this paper is to resolve the question: how far an extension of this result in the form
$$
\big\|\diag\big(\pmb{\lambda}(A)-\pmb{\lambda}(\widetilde A)\big)\big\|_{\UI}
\le \phi(\eta,\epsilon_2)\,\|A-\widetilde A\|_{\UI}
$$
remains valid for some or all unitarily invariant norms $\|\cdot\|_{\UI}$? Two results are obtained: (a) the extension holds for any $Q$-norm, a subclass of unitarily invariant norms that encompasses the Schatten $p$-norm for $2\le p\le\infty$ (particularly, the Frobenius norm and the spectral norm included), and (b) the extension holds for any unitarily invariant norm if $\rank(E)\le 1$.
It is demonstrated that the equality is attained on the $2\times 2$ matrix $A$.
\end{abstract}

\medskip
{\small
{\bf Key words. Hermitian matrix, eigenvalue, singular value.}    
\medskip

{\bf AMS subject classifications. 15A42, 15A18, 65F15.}
} 

\clearpage
\section{Introduction}
\label{sec:intro}
Consider  a partitioned Hermitian matrix
\begin{equation}\label{eq:A}
A=\begin{bmatrix}
   H_1 & E^* \\
   E & H_2
   \end{bmatrix}\in\bbC^{N\times N},
\end{equation}
where $H_1\in\bbC^{m\times m}$ and $H_2\in\bbC^{n\times n}$ are Hermitian,
$E\in\bbC^{n\times m}$ and $E^*$ is its complex conjugate transpose, and $N=m+n$.
One of the core questions in eigenvalue computation is how much the eigenvalues of $A$ will change if $E$ and $E^*$ are dropped so as to deflate
the eigenvalue problem for $A$  into two smaller and independent eigenvalue problems for $H_1$ and $H_2$, respectively.
There are a number of results in matrix perturbation theory that can be used to bound the changes in eigenvalues
\cite{bhat:1996,daka:1970,demm:1997,govl:2013,li:2014HLA,math:1998,parl:1998,stsu:1990}.
Particularly, Li and Li~\cite{lili:2005} established an elegant bound on
the change in each individual eigenvalue of $A$. In this paper, our goal is to extend the main result in \cite{lili:2005}
from the spectral norm to unitarily invariant norms.

Dropping   $E$ and $E^*$ from $A$, we get
\begin{equation}
\label{eq:Atilde}
\wtd A
=
\begin{bmatrix}
H_1 & 0 \\
0 & H_2
\end{bmatrix}
\end{equation}
which is composed of the block-diagonal part of $A$; let
\begin{equation}
\label{eq:V}
V:=A-\wtd A=
\begin{bmatrix}
0 & E^* \\
E & 0
\end{bmatrix}.
\end{equation}
Denote the eigenvalues of $A$ and $\wtd A$, in decreasing order, by
$$
\lambda_1(A)\ge\cdots\ge\lambda_N(A),
\quad
\lambda_1(\wtd A)\ge\cdots\ge\lambda_N(\wtd A),
$$
respectively, and by $\eig(A)$ the multiset of eigenvalues of $A$, i.e.,
$\eig(A)=\{\lambda_i(A)\}_{i=1}^N$ and similarly for $\eig(\wtd A)$.
Pack these eigenvalues into two column vectors
\begin{equation}\nonumber
\blambda(A)=[\lambda_1(A),\ldots,\lambda_N(A)]^{\T},\quad
\blambda(\wtd A)=[\lambda_1(\wtd A),\ldots,\lambda_N(\wtd A)]^{\T}\in\bbR^N
\end{equation}
for $A$ and $\wtd A$, respectively,
where $(\cdot)^{\T}$ takes the matrix/vector transpose, and
define
\begin{subequations}\label{eq:gaps}
\begin{align}
\eta_i &:=
     \begin{cases}
     \min\limits_{\mu_2\in\eig(H_2)}|\lambda_i(\wtd A)-\mu_2|,
                    &\mbox{if $\lambda_i(\wtd A)\in\eig(H_1)$}, \\
     \min\limits_{\mu_1\in\eig(H_1)}|\lambda_i(\wtd A)-\mu_1|,
                    &\mbox{if $\lambda_i(\wtd A)\in\eig(H_2)$},
     \end{cases}  \label{eq:gaps-1} \\
\eta&:=\gap\bigl(\eig(H_1),\eig(H_2)\bigr)
   :=\min_{\mu_1\in\eig(H_1),\,\mu_2\in\eig(H_2)}|\mu_1-\mu_2|=\min_{1\le i\le N}\eta_i.
\label{eq:gaps-2}
\end{align}
\end{subequations}
It is noted that $\eta_i$ is the individual gap between $\lambda_i(\wtd A)$ from either $\eig(H_1)$ or $\eig(H_2)$
whichever $\lambda_i(\wtd A)$ does not belong to, while $\eta$ is the gap between $\eig(H_1)$ and $\eig(H_2)$.
Throughout, $\|\cdot\|_2$  is the matrix spectral norm or the vector Euclidean
norm, and
$\|\cdot\|_{\F}$ is the matrix Frobenius norm.
Let
$$
\epsilon_2:=\|E\|_2=\|V\|_2, \quad
\epsilon_{\F}:=\|E\|_{\F}.
$$
Define, for $\delta,\epsilon\ge0$,
\begin{equation}\label{eq:phi}
\phi(\delta,\epsilon)
:=
\begin{cases}
\dfrac{2\epsilon}{\delta+\sqrt{\delta^2+4\epsilon^2}},
   &(\delta,\epsilon)\ne(0,0),\\[1em]
1,&(\delta,\epsilon)=(0,0).
\end{cases}
\end{equation}
Making $\phi(0,0)=1$ is a harmless convention.
The main result of Li and Li~\cite{lili:2005} (see also the corrected arXiv version~\cite{lili:2025arXiv}) is
\begin{subequations}\label{eq:lili2005}
\begin{align}
|\lambda_i(A)-\lambda_i(\wtd A)|
   &{\le \phi(\eta_i,\epsilon_2)\,\epsilon_2}
       \quad\mbox{for $i=1,2,\ldots,N$},  \label{eq:lili2005a} \\
\|\blambda(A)-\blambda(\wtd A)\|_{\infty}
   &{\le \phi(\eta,\epsilon_2)\,\epsilon_2}.  \label{eq:lili2005b}
\end{align}
\end{subequations}
{Here $\|\cdot\|_{\infty}$ is the vector $\ell_{\infty}$-norm.}
{Inequality} \eqref{eq:lili2005b} is a corollary
of \eqref{eq:lili2005a} because $\eta_i\ge\eta$ for all $i$ and
$$
\|\blambda(A)-\blambda(\wtd A)\|_{\infty}=\max_{1\le i\le N}|\lambda_i(A)-\lambda_i(\wtd A)|.
$$
The inequalities in \eqref{eq:lili2005} encompass and elegantly merge the following classical results.
\begin{enumerate}[(i)]
    \item Weyl-Lidskii theorem \cite{bhat:1996,parl:1998,stsu:1990}:
     \begin{equation}\label{bd:classical-1}
     \|\blambda(A)-\blambda(\wtd A)\|_{\infty} \le \epsilon_2.
     \end{equation}
    \item Quadratic residual bounds      \cite{bhat:1996,demm:1997,govl:2013,math:1998,parl:1998,stsu:1990}:
    If the spectra of $H_1$ and $H_2$ are disjoint, i.e., $\eta>0$, then
    \begin{equation}\label{bd:classical-2}
    \|\blambda(A)-\blambda(\wtd A)\|_{\infty} \le \frac {\epsilon_2^2}{\eta}.
    \end{equation}
\end{enumerate}
These classical results are weaker because
$$
\phi(\eta,\epsilon_2)\,\epsilon_2
=\frac{2\epsilon_2^2}{\eta+\sqrt{\eta^2+4\epsilon_2^2}}
\le \min\left\{\epsilon_2,\frac{\epsilon_2^2}{\eta}\right\}.
$$
The bound $\epsilon_2$ by \eqref{bd:classical-1} on the differences $|\lambda_i(A)-  \lambda_i(\wtd A)|$  does not depend on gap $\eta$ and
is linear in
$\epsilon_2$, while $\epsilon_2^2/\eta$ by \eqref{bd:classical-2} is quadratic in $\epsilon_2$, which can be much smaller than
$\epsilon_2$ provided that gap $\eta>0$ is not too small, but can unfortunately ``blow up'' as $\eta$ becomes too small.
In any case the bound by \eqref{eq:lili2005} behaves best of both worlds.

The inequalities in \eqref{eq:lili2005} provide bounds on
individual differences $|\lambda_i(A)-\lambda_i(\wtd A)|$, along the line of Weyl-Lidskii theorem \eqref{bd:classical-1}.
On the other hand, the classical Hoffman--Wielandt theorem \cite{howi:1953} applied to $A$ and $\wtd A$ gives
\begin{equation}\label{eq:howi1953}
\|\blambda(A)-\blambda(\wtd A)\|_2=\sqrt{\sum_{i=1}^N|\lambda_i(A)-\lambda_i(\wtd A)|^2}
  \le\|V\|_{\F}=\sqrt 2\|E\|_{\F}=:\sqrt 2\,\epsilon_{\F}
\end{equation}
which bounds the total sum of the squared differences $|\lambda_i(A)-\lambda_i(\wtd A)|$.
More generally, the Lidskii--Mirsky--Wielandt theorem \cite{bhat:1996,stsu:1990} yields
$$
\big\|\diag\big(\blambda(A)-\blambda(\wtd A)\big)\big\|_{\UI}\le\|V\|_{\UI}
$$
for every unitarily invariant norm $\|\cdot\|_{\UI}$ where $\diag(\cdot)$ turns a vector into
a diagonal matrix with the vector's components as the diagonal entries in order. 
One natural question is whether we would have something similar to \eqref{eq:lili2005} but for some kind of total difference in eigenvalue
changes as in \eqref{eq:howi1953}, or, more generally, for any unitarily invariant norm.
{The purpose of this paper is to address this question. Specifically, we ask whether and under what conditions}
\begin{equation}
\label{eq:main}
\big\|\diag\big(\blambda(A)-\blambda(\wtd A)\big)\big\|_{\UI}
\le \phi(\eta,\epsilon_2)\,\|V\|_{\UI}
\end{equation}
holds, where $\|\cdot\|_{\UI}$ denotes any or some unitarily invariant norms on $\bbC^{N\times N}$.
We achieve partial successes, namely, we show
\begin{enumerate}[(a)]
  \item \eqref{eq:main} holds for any $Q$-norm, a special subclass of unitarily invariant norms that encompasses the Schatten $p$-norm for $2\le p\le\infty$ and, in particular, the Frobenius norm, and 
  \item \eqref{eq:main} holds for any unitarily invariant norm if $\rank(E)\le 1$.
\end{enumerate}
Whether \eqref{eq:main} holds for every unitarily invariant norm without any restriction on $\rank(E)$
remains open.

The rest of this paper is organized as follows.
We establish some preliminary results in section \ref{sec:prelim} to be used for section \ref{sec:QNorm} on the $Q$-norm case and
for section \ref{sec:rank1E} on the case of $\rank(E)\le 1$.
In Section \ref{sec:svd}, we apply the main result on the $Q$-norm in section \ref{sec:QNorm} to the singular value problem.
Finally we draw our conclusions in section \ref{sec:concl}.

\section{Preliminaries}
\label{sec:prelim}

Let $A$ and $\wtd A$ be as in \eqref{eq:A} and \eqref{eq:Atilde} of \cref{sec:intro}.
Let
\begin{equation}\label{eq:H(t)}
H(t):=\wtd A+tV\quad\mbox{for $t\in[0,1]$}
\end{equation}
for which $H(0)=\wtd A$ and $H(1)=A$.
Denote by
$$
\lambda_1(t)\ge\lambda_2(t)\ge\cdots\ge\lambda_N(t)
$$
the ordered eigenvalues of $H(t)$, and
\begin{equation}\label{eq:blambda}
\blambda(t)=[\lambda_1(t),\lambda_2(t),\ldots,\lambda_N(t)]^{\T}\in\bbR^N.
\end{equation}
Thus $\blambda(0)=\blambda(\wtd A)$ and $\blambda(1)=\blambda(A)$. A   well-known relationship (see e.g., \cite[Section 9.4.3]{govl:2013}) between the eigenvalues of $V$ in~\eqref{eq:V} and singular values of $E$ gives the following result.
\begin{lemma}  
\label{lem:sV}
For $V$ as in~\eqref{eq:V},
the nonzero singular values of $V$ are those of $E$, each repeated twice.
In particular,
\begin{equation}
\nonumber
\|V\|_2=\|E\|_2=:\epsilon_2,
\quad
\|V\|_{\F}=\sqrt2\,\|E\|_{\F}=:\sqrt2\,\epsilon_{\F}.
\end{equation}
If $\rank(E)\le 1$, the singular values of $V$ are two copies of $\epsilon_2$ and $N-2$ copies of $0$.
\end{lemma}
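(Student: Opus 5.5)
The plan is to compute $V^*V=V^2$ explicitly and read off the singular values as square roots of its eigenvalues. Since $V$ is Hermitian,
\[
V^*V=V^2=\begin{bmatrix} E^*E & 0\\ 0 & EE^*\end{bmatrix}.
\]
Its eigenvalues are therefore the union, as multisets, of the eigenvalues of $E^*E\in\bbC^{m\times m}$ and of $EE^*\in\bbC^{n\times n}$.

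Let $\sigma_1,\dots,\sigma_r>0$ be the nonzero singular values of $E$, where $r=\rank(E)$. The nonzero eigenvalues of $E^*E$ are $\sigma_1^2,\dots,\sigma_r^2$, and the same holds for $EE^*$. This follows from the SVD $E=U\Sigma W^*$, which gives $E^*E=W\Sigma^*\Sigma W^*$ and $EE^*=U\Sigma\Sigma^*U^*$. Hence the nonzero eigenvalues of $V^*V$ are $\sigma_1^2,\dots,\sigma_r^2$, each appearing twice. Taking square roots, the nonzero singular values of $V$ are $\sigma_1,\dots,\sigma_r$, each repeated twice, and the remaining $N-2r$ singular values are zero.

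Alternatively, one can exhibit the eigen-decomposition directly. Write $E=U\Sigma W^*$ with singular vector pairs $(u_j,w_j)$, so that $Ew_j=\sigma_ju_j$ and $E^*u_j=\sigma_jw_j$. Then the vectors $\frac1{\sqrt2}[w_j;\pm u_j]$ are orthonormal eigenvectors of $V$ with eigenvalues $\pm\sigma_j$.

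The norm identities then follow directly:
\[
\|V\|_2=\max_j\sigma_j=\|E\|_2=\epsilon_2,
\qquad
\|V\|_{\F}^2=2\sum_j\sigma_j^2=2\|E\|_{\F}^2 .
\]
Taking square roots of the second gives $\|V\|_{\F}=\sqrt2\,\epsilon_{\F}$.

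If $\rank(E)\le1$, then $E$ has at most one nonzero singular value, namely $\|E\|_2=\epsilon_2$. So $V$ has singular values $\epsilon_2,\epsilon_2$ and $N-2$ zeros. When $E=0$ this is still correct, since then $\epsilon_2=0$ and all $N$ singular values are zero.

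There is no real obstacle here. The only point requiring care is the multiplicity bookkeeping: one must confirm that the zero eigenvalues of $E^*E$ and $EE^*$ account for exactly $N-2r$ zero singular values of $V$.
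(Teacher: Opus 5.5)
Your proof is correct and takes essentially the paper's approach. The paper offers no argument beyond citing the standard fact (Golub--Van Loan, \S9.4.3) that the eigenvalues of $V=\begin{bmatrix}0&E^*\\E&0\end{bmatrix}$ are $\pm\sigma_j(E)$ together with $N-2\rank(E)$ zeros, which is exactly your alternative eigenvector construction; your main computation $V^*V=\diag(E^*E,EE^*)$ is an equally short, self-contained route to the same conclusion and settles the multiplicity count.
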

 
In connection with $\phi(\delta,\epsilon)$ in \eqref{eq:phi},  we define a factor 
\begin{equation}
\label{eq:c}
c(t)
:=
\begin{cases}
1, & \eta=0,\\[0.4em]
\dfrac{2t\epsilon_2}{\sqrt{\eta^2+4t^2\epsilon_2^2}},
   & \eta>0,
\end{cases}
\qquad \mbox{for $t\in[0,1]$}.
\end{equation}
A simple computation gives, for all $\eta,\epsilon_2\ge0$,
\begin{equation}
\label{eq:intc}
\int_0^1 c(t)\,\rmd\! t
=
\phi(\eta,\epsilon_2),
\end{equation}
where $\phi(\cdot,\cdot)$ is as defined  in \eqref{eq:phi}.

The next lemma provides a key tool to establish our two main results for \eqref{eq:main}; indeed, this lemma actually requires only that matrices $\wtd A$ and $V$ are Hermitian, not necessarily having
the block structures as the ones in \cref{sec:intro}.

\begin{lemma}  
\label{lem:derivative}
Let
$\wtd A,\,V\in\bbC^{N\times N}$ be Hermitian.
Then each map $t\mapsto\lambda_i(t)$ is Lipschitz continuous, and {at every point $t$ at which all the ordered eigenvalue functions
$\lambda_1(\cdot),\ldots,\lambda_N(\cdot)$ are differentiable (hence for almost every $t\in[0,1]$),}
there is an orthonormal set $\{\bx_1(t),\ldots,\bx_N(t)\}$ of eigenvectors of
$H(t)$, associated with its eigenvalues $\lambda_i(t)$, i.e.,  $H(t)\bx_i(t)=\lambda_i(t)\bx_i(t)$, satisfying 
\begin{equation}
\label{eq:deriv-l2}
\frac{\rmd}{\rmd\!t}\lambda_j(t)=
\bx_j(t)^*V\bx_j(t)=:z_j(t)
\end{equation}
for $j=1,2,\ldots,N$.
\end{lemma}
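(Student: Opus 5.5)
The proof has two parts: Lipschitz continuity of each $\lambda_i(t)$, and the identification of the derivatives at points of differentiability with Rayleigh quotients of $V$ along a suitable orthonormal eigenbasis.

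\emph{Lipschitz continuity.} By Weyl's inequality applied to $H(t)$ and $H(s)=H(t)+(s-t)V$,
$$
|\lambda_i(t)-\lambda_i(s)|\le |t-s|\,\|V\|_2 .
$$
Hence each $\lambda_i(\cdot)$ is Lipschitz with constant $\|V\|_2$. Rademacher's theorem in one dimension then gives differentiability of each $\lambda_i$ almost everywhere. Since there are finitely many functions, all of them are differentiable simultaneously for almost every $t$.

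\emph{Derivatives at a fixed point.} Fix such a $t_0$. Group the indices into clusters of equal eigenvalues: let $\mu$ be an eigenvalue of $H(t_0)$ of multiplicity $k$, occupying positions $j=p+1,\ldots,p+k$, with eigenspace spanned by an orthonormal matrix $X_\mu$. Standard analytic perturbation theory (Rellich/Kato) for the Hermitian pencil $H(t_0)+sV$ says the following. For small $|s|$, the $k$ eigenvalues of $H(t_0+s)$ near $\mu$ are
$$
\mu+s\nu_\ell+O(s^2),\qquad \ell=1,\ldots,k,
$$
where $\nu_1\ge\cdots\ge\nu_k$ are the eigenvalues of the $k\times k$ Hermitian matrix $X_\mu^*VX_\mu$. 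If one prefers to avoid analytic theory, this first-order expansion also follows from a Schur-complement/min-max argument.

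Because eigenvalues near distinct clusters stay separated for small $|s|$, the ordered eigenvalues in positions $p+1,\ldots,p+k$ are exactly these. For $s>0$ the ordering is $\mu+s\nu_1\ge\cdots$, so the right derivative of $\lambda_{p+\ell}$ at $t_0$ is $\nu_\ell$. For $s<0$ the order reverses, so the left derivative of $\lambda_{p+\ell}$ is $\nu_{k+1-\ell}$.

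\emph{Main obstacle: the cluster structure.} The delicate step is using differentiability to conclude that the cluster is ``split-free''. Differentiability at $t_0$ forces the left and right derivatives to agree, so $\nu_\ell=\nu_{k+1-\ell}$ for all $\ell$. Combined with $\nu_1\ge\cdots\ge\nu_k$, this gives $\nu_1=\nu_k$, i.e., all $\nu_\ell$ equal a common value $\nu$. Then $X_\mu^*VX_\mu=\nu I_k$. Consequently, for any orthonormal basis $\bx_{p+1},\ldots,\bx_{p+k}$ of the eigenspace (e.g., the columns of $X_\mu$),
$$
\bx_j^*V\bx_j=\nu=\frac{\rmd}{\rmd t}\lambda_j(t_0).
$$

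\emph{Assembling the basis.} Do this for every distinct eigenvalue $\mu$. The eigenspaces of distinct eigenvalues of the Hermitian $H(t_0)$ are mutually orthogonal, so the union of the chosen bases is an orthonormal set $\{\bx_1(t_0),\ldots,\bx_N(t_0)\}$ of eigenvectors with $H(t_0)\bx_j(t_0)=\lambda_j(t_0)\bx_j(t_0)$ and $\frac{\rmd}{\rmd t}\lambda_j(t_0)=\bx_j(t_0)^*V\bx_j(t_0)$ for every $j$. This is \eqref{eq:deriv-l2}.
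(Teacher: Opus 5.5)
Your proof is correct. Its skeleton matches the paper's: Weyl's inequality for the Lipschitz bound, a.e.\ differentiability, compression of $V$ to each eigenspace of $H(t_0)$, and first-order perturbation theory from Rellich/Kato.

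Where you differ is in handling multiple eigenvalues. The paper diagonalizes each compression $X_\mu^*VX_\mu$ and uses its eigenvectors as the basis. It then cites the ``first-order perturbation formula for ordered eigenvalues'' to say the derivatives agree as a multiset with the Rayleigh quotients, and finishes with a relabeling inside each eigenspace.

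You instead compute the one-sided derivatives of the ordered eigenvalues explicitly: right derivatives $\nu_1,\ldots,\nu_k$, left derivatives in reverse order. You then observe that two-sided differentiability forces $\nu_1=\cdots=\nu_k$, i.e.\ $X_\mu^*VX_\mu=\nu I_k$. This gains two things. It actually proves the step the paper only cites, and it makes the relabeling vacuous. It also shows that at points of differentiability \emph{any} orthonormal eigenbasis of $H(t_0)$ satisfies \eqref{eq:deriv-l2}, so the paper's careful choice within eigenspaces is unnecessary there.

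The paper's construction is the one that survives when only one-sided derivatives are available. For example, at $t=0$ or $t=1$ with differentiability read one-sidedly, taking compression eigenvectors in decreasing order of $\nu_\ell$ reproduces the right derivatives. Your argument needs genuine two-sided differentiability, which is harmless here because the lemma is only used for almost every $t$.

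One step you should make explicit. When some $\nu_\ell$ coincide, the $O(s^2)$ remainders may reorder the branches. Since sorting is $1$-Lipschitz in the $\ell_\infty$ norm, the ordered eigenvalues are still $\mu+s\nu_\ell+O(s^2)$ for $s>0$ and $\mu+s\nu_{k+1-\ell}+O(s^2)$ for $s<0$. That is exactly what your one-sided derivative claims require.
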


\begin{proof}
Lipschitz continuity of ordered Hermitian eigenvalues under Hermitian
perturbations is standard: Weyl's inequality \cite{stsu:1990} (see, also, \eqref{bd:classical-1}) yields
\[
\bigl|\lambda_j(t+h)-\lambda_j(t)\bigr|
\le
\|hV\|_2
=
|h|\,\|V\|_2,
\]
so each $\lambda_j(t)$ is Lipschitz continuous,  {hence differentiable almost
everywhere  by \cite[Corollary 6.1.5]{heil:2019}.}
{The intersection of the corresponding full-measure differentiability sets still has full measure.
Fix a point $t$ in this intersection, and construct the basis $\{\bx_j(t)\}$ as follows.}
Let $\mu$ run over the distinct eigenvalues of $H(t)$, and let
$\mathcal{E}_{\mu}:=\ker\bigl(H(t)-\mu I\bigr)$ be the corresponding eigenspace,
of dimension $n_{\mu}$. Write $P_{\mu}$ for the orthogonal projector onto
$\mathcal{E}_{\mu}$. The \emph{compression} of $V$ to $\mathcal{E}_{\mu}$ is the
Hermitian operator
\[
V_{\mu}:=
P_{\mu} V\big|_{\mathcal{E}_{\mu}}:
\mathcal{E}_{\mu}\to\mathcal{E}_{\mu}.
\]
(Equivalently, if $X_{\mu}$ is any orthonormal basis matrix of $\mathcal{E}_{\mu}$,
then $X_{\mu}^*VX_{\mu}$ is an $n_{\mu}\times n_{\mu}$ Hermitian matrix representing
$V_{\mu}$.)
Hence $V_{\mu}$ admits an orthonormal eigenbasis of $\mathcal{E}_{\mu}$.
Doing this for every $\mu$ and concatenating yields an orthonormal basis
$\{\bx_1(t),\ldots,\bx_N(t)\}$ of $\bbC^N$ consisting of eigenvectors of $H(t)$
such that each $\bx_j(t)$ is also an eigenvector of the compression of $V$ on
its eigenspace. In particular $\bx_j(t)^*V\bx_k(t)=0$ whenever $\bx_j(t),\bx_k(t)$
lie in the same eigenspace and $j\neq k$.

With this choice, the first-order perturbation formula for ordered
eigenvalues asserts that the derivatives $\{\lambda_i'(t)\}$
coincide, as a multiset, with the Rayleigh quotients
$\{z_j(t)=\bx_j(t)^*V\bx_j(t)\}$~\cite[Ch.~II]{kato:1995},~\cite[Ch.~VI]{bhat:1996}.
{Within each multiple eigenspace, relabel the selected basis vectors so that
$z_j(t)=\lambda_j'(t)$ for the corresponding ordered eigenvalue indices.}
(If all eigenvalues are simple, any unit eigenvector already works and the
compression is $1\times1$. The construction above is needed only when
multiplicities occur, so that one selects the ``correct'' directions inside
each eigenspace.) This gives~\eqref{eq:deriv-l2}.
\end{proof}

The derivative formula \eqref{eq:deriv-l2} when $\lambda_j(t)$ is a simple eigenvalue of $H(t)$ is not really new.
It actually holds for non-Hermitian analytic matrix-valued functions (see, e.g., \cite{grlo:2020,part:1978}).

In \Cref{lem:derivative}, it is shown that $\blambda(t)$ is Lipschitz continuous and hence absolutely continuous \cite[Lemma 6.1.3]{heil:2019}. The next lemma  is about integration involving Lipschitz continuous functions, intending to be applied to $\blambda(t)$ later on.

\begin{lemma}  
\label{lem:length}
If $\bgamma:[0,1]\to\bbR^N$ is Lipschitz (hence absolutely continuous), then
\[
\|\bgamma(1)-\bgamma(0)\|
\le
\int_0^1 \|\bgamma'(t)\|\,\rmd\! t
\]
for any vector norm $\|\cdot\|$ on $\bbR^N$. Moreover, 
\begin{subequations}\label{eq:length}
\begin{align}
\sum_{j=1}^N\bigl(\gamma_j(1)-\gamma_j(0)\bigr)_+
    &\le\int_0^1\sum_{j=1}^N\bigl(\gamma_j'(t)\bigr)_+\,\rmd\! t,\label{eq:length-plus} \\
\sum_{j=1}^N\bigl(\gamma_j(1)-\gamma_j(0)\bigr)_-
    &\le\int_0^1\sum_{j=1}^N\bigl(\gamma_j'(t)\bigr)_-\,\rmd\! t,\label{eq:length-minus}
\end{align}
\end{subequations}
where $\gamma_j(t)$ is the $j$th component of $\bgamma(t)$,
$(a)_+=\max\{a,0\}$ and $(a)_-=\max\{-a,0\}$.
\end{lemma}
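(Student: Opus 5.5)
The plan is to reduce everything to the fundamental theorem of calculus for absolutely continuous functions. Since $\bgamma$ is Lipschitz on $[0,1]$, each component $\gamma_j$ is absolutely continuous. Hence $\gamma_j'$ exists almost everywhere, is bounded by the Lipschitz constant, and so is integrable. Moreover
\[
\gamma_j(1)-\gamma_j(0)=\int_0^1\gamma_j'(t)\,\rmd t .
\]
Stacking the components gives the vector identity $\bgamma(1)-\bgamma(0)=\int_0^1\bgamma'(t)\,\rmd t$, with the integral taken componentwise. The measurability of $t\mapsto\|\bgamma'(t)\|$ follows because any norm on $\bbR^N$ is continuous and $\bgamma'$ is measurable, being an a.e.\ limit of difference quotients. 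It is integrable because all norms on $\bbR^N$ are equivalent, so $\|\bgamma'(t)\|$ is bounded.

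For the first inequality I would use duality rather than Riemann sums, to avoid any regularity issues. By Hahn--Banach (finite-dimensional), there is a linear functional $f$ with dual norm $\|f\|_*\le1$ and $f(\bgamma(1)-\bgamma(0))=\|\bgamma(1)-\bgamma(0)\|$. Writing $f(\bv)=\ba^{\T}\bv$, linearity lets $f$ pass through the componentwise integral:
\[
\|\bgamma(1)-\bgamma(0)\|=\int_0^1\ba^{\T}\bgamma'(t)\,\rmd t\le\int_0^1\|\bgamma'(t)\|\,\rmd t,
\]
since $\ba^{\T}\bv\le\|\bv\|$ for all $\bv$.

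For \eqref{eq:length-plus}, I would fix $j$ and use two facts: $a\mapsto(a)_+$ is convex and positively homogeneous, and $(a)_+=\max\{a,0\}$. These give
\[
\Bigl(\int_0^1\gamma_j'\,\rmd t\Bigr)_+\le\int_0^1(\gamma_j')_+\,\rmd t .
\]
This holds because $\int\gamma_j'\le\int(\gamma_j')_+$ and $0\le\int(\gamma_j')_+$. Summing over $j$ and exchanging the finite sum with the integral yields \eqref{eq:length-plus}. Applying the same argument to $-\bgamma$ gives \eqref{eq:length-minus}, since $(a)_-=(-a)_+$.

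The only step requiring any care is justifying the fundamental theorem of calculus, $\gamma_j(1)-\gamma_j(0)=\int_0^1\gamma_j'$. This follows directly from absolute continuity, as in \cite[Lemma 6.1.3]{heil:2019} and the associated Lebesgue FTC. Everything else is elementary convexity and duality, so I expect no real obstacle.
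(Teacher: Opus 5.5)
Your proposal is correct and takes essentially the same route as the paper. Both arguments apply the fundamental theorem of calculus for absolutely continuous functions and then the integral triangle inequality for the first claim. For \eqref{eq:length-plus} both use the componentwise bound $\bigl(\int_0^1\gamma_j'\,\rmd t\bigr)_+\le\int_0^1(\gamma_j')_+\,\rmd t$ summed over $j$, and \eqref{eq:length-minus} follows by applying this to $-\bgamma$.

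The only difference is that you prove $\|\int_0^1\bgamma'(t)\,\rmd t\|\le\int_0^1\|\bgamma'(t)\|\,\rmd t$ for an arbitrary norm using a norming linear functional and also check measurability and integrability, whereas the paper takes these steps for granted.
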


\begin{proof}
Since $\bgamma$ is Lipschitz, it is absolutely continuous. Its derivative
$\bgamma'(t)$ exists almost everywhere and is Lebesgue integrable.
By the fundamental theorem of calculus for absolutely continuous functions  {\cite[Theorem 6.4.2]{heil:2019},}
$$
\bgamma(b)-\bgamma(a)=\int_a^b\bgamma'(t)\,\rmd\!t,  \quad 0\le a\le b\le 1.
$$
Taking $a=0$ and $b=1$  yields
$
\|\bgamma(1)-\bgamma(0)\|
=\left\|\int_0^1\bgamma'(t)\,\rmd\!t\right\|
\le\int_0^1\|\bgamma'(t)\|\,\rmd\!t.
$

For \eqref{eq:length}, as each component $\gamma_j(\cdot)$ of $\bgamma(\cdot)$ is absolutely continuous, and thus its derivative
$\gamma_j'(t)$ exists almost everywhere and is Lebesgue integrable, so are
 $(\gamma_j')_+$ and $(\gamma_j')_-$. We have
\begin{align*}
\bigl(\gamma_j(1)-\gamma_j(0)\bigr)_+
&=\left(\int_0^1\gamma_j'(t)\,\rmd\!t\right)_+  =\left(\int_0^1(\gamma_j'(t))_+\,\rmd\!t
       -\int_0^1(\gamma_j'(t))_-\,\rmd\!t\right)_+  \le\int_0^1(\gamma_j'(t))_+\,\rmd\!t.
\end{align*}
Summing it over $j$ gives~\eqref{eq:length-plus}. Applying the same argument
to $-\bgamma$ proves \eqref{eq:length-minus}.
\end{proof}

\section{The $Q$-norm}
\label{sec:QNorm}
Von Neumann~\cite{neum:1937} elegantly identified unitarily invariant norms with the so-called symmetric gauge functions. In fact,
there is a one-to-one correspondence between the two, which is critical to the proof below.
We now state the definition of a symmetric gauge function and explain this correspondence (see also \cite[section~II.3]{stsu:1990}).

\begin{definition}\label{def:SGF}
A function $\Phi\,:\,\bbR^N\to [0,\infty)$ is a symmetric gauge function
if it satisfies the following conditions.
\begin{enumerate}[(1)]
  \item $\bx\ne \mathbf{0}\quad\Leftrightarrow\quad \Phi(\bx)>0$;
  \item $\Phi(\alpha\bx)=|\alpha|\Phi(\bx)$ for any $\alpha\in\bbR$;
  \item $\Phi(\bx+\by)\le\Phi(\bx)+\Phi(\by)$;
  \item $\Phi(P\bx)=\Phi(\bx)$ for any permutation matrix $P\in\bbR^{N\times N}$;
  \item $\Phi(|\bx|)=\Phi(\bx)$ where $|\bx|$ takes entrywise absolute value on $\bx$.
\end{enumerate}
\end{definition}

A symmetric gauge function on $\bbR^N$ itself is a vector norm on $\bbR^N$ by items (1) -- (3) in the definition. This is important because it means that \Cref{lem:length}
remains valid with $\|\cdot\|$ replaced by a symmetric gauge function.
The {one-to-one} correspondence between unitarily invariant norms and symmetric gauge functions goes as follows: for a given unitarily invariant norm $\|\cdot\|_{\UI}$ on $\bbC^{N\times N}$, there exists a
symmetric gauge function $\Phi_{\UI}\,:\,\bbR^N\to [0,\infty)$ such that
$$
\|Y\|_{\UI}=\Phi_{\UI}(\bsigma(Y))\quad\mbox{for $Y\in\bbC^{N\times N}$}
$$
where $\bsigma(Y)\in\bbR^N$ is the vector of singular values of \(Y\) arranged in nonincreasing order; conversely,
for a given symmetric gauge function $\Phi\,:\,\bbR^N\to [0,\infty)$,
$$
\|Y\|_{\Phi}:=\Phi(\bsigma(Y))\quad\mbox{for $Y\in\bbC^{N\times N}$}
$$
is a unitarily invariant norm on $\bbC^{N\times N}$.

The notion of the $Q$-norm was introduced by
Bhatia~\cite{bhat:1987a} and was subsequently used by Bhatia,
Kittaneh, and Li~\cite{bhkl:1998} in the eigenvalue perturbation
analysis of symmetrizable matrices. A unitarily invariant norm $\|\cdot\|_{\UI}$ on
$\bbC^{N\times N}$ is called a $Q$-norm if there exists another
unitarily invariant norm $\|\cdot\|_{\UI'}$ such that
\begin{equation}\label{eq:Qnorm-defn}
\|Y\|_{\UI}
=
\bigl(\|Y^*Y\|_{\UI'}\bigr)^{1/2}
\quad\mbox{for $Y\in\bbC^{N\times N}$}.
\end{equation}
Equivalently, if $\Phi_{\UI}$ and $\Phi_{\UI'}$ are the
associated symmetric gauge functions, then
$$
\Phi_{\UI}(\bx)
=
\bigl[\Phi_{\UI'}(\bx\circ\bx)\bigr]^{1/2},
\qquad \bx\in\bbR^N,
$$
where
$
 \bx\circ\bx=[x_1^2,\ldots,x_N^2]^{\T}.
$

For ease of distinction, we will use $\|\cdot\|_Q$ to refer to a general $Q$-norm from now on.
Given unitarily invariant norm $\|\cdot\|_{\UI}$, the existence of another unitarily invariant norm satisfying
\eqref{eq:Qnorm-defn}   qualifies that $\|\cdot\|_{\UI}$  is a $Q$-norm. It turns out that the converse is also true.

\begin{lemma}\label{lm:Qnorm-gen}
For any unitarily invariant norm $\|\cdot\|_{\UI}$, $\|Y\|:=\bigl(\|Y^*Y\|_{\UI}\bigr)^{1/2}$ is a $Q$-norm.
\end{lemma}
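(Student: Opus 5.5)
The plan is to pass to symmetric gauge functions and reduce everything to showing that a certain vector function is a symmetric gauge function. Let $\Phi=\Phi_{\UI}$ be the symmetric gauge function of $\|\cdot\|_{\UI}$. Since the eigenvalues (equivalently, singular values) of $Y^*Y$ are $\sigma_i(Y)^2$, we get $\|Y^*Y\|_{\UI}=\Phi(\bsigma(Y)\circ\bsigma(Y))$. Hence $\|Y\|=\Psi(\bsigma(Y))$ with
$$
\Psi(\bx):=\bigl[\Phi(\bx\circ\bx)\bigr]^{1/2},\qquad \bx\in\bbR^N .
$$
If $\Psi$ satisfies items (1)--(5) of \Cref{def:SGF}, then von Neumann's correspondence shows that $\|\cdot\|$ is a unitarily invariant norm on $\bbC^{N\times N}$. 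By construction it satisfies \eqref{eq:Qnorm-defn} with $\|\cdot\|_{\UI'}:=\|\cdot\|_{\UI}$, so it is a $Q$-norm.

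Items (1), (2), (4), (5) for $\Psi$ are immediate from those for $\Phi$:
\begin{itemize}
\item $\bx\circ\bx=\mathbf 0$ iff $\bx=\mathbf 0$;
\item $(\alpha\bx)\circ(\alpha\bx)=\alpha^2\,\bx\circ\bx$;
\item permutations commute with $\circ$;
\item $|\bx|\circ|\bx|=\bx\circ\bx$.
\end{itemize}

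The main obstacle is the triangle inequality (3). I will use the standard monotonicity of symmetric gauge functions: if $|\bu|\le|\bv|$ entrywise then $\Phi(\bu)\le\Phi(\bv)$ (\cite[section~II.3]{stsu:1990}).

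First, by (5) and monotonicity, since $|\bx+\by|\le|\bx|+|\by|$, it suffices to treat $\bx,\by\ge0$. Then
$$
\Psi(\bx+\by)^2=\Phi(\bx\circ\bx+2\,\bx\circ\by+\by\circ\by)\le \Phi(\bx\circ\bx)+2\Phi(\bx\circ\by)+\Phi(\by\circ\by).
$$
So it remains to prove the Cauchy--Schwarz-type bound
$$
\Phi(\bx\circ\by)\le\Phi(\bx\circ\bx)^{1/2}\,\Phi(\by\circ\by)^{1/2}.
$$
For this, note that for any $s>0$ we have entrywise $0\le \bx\circ\by\le\tfrac12\bigl(s\,\bx\circ\bx+s^{-1}\by\circ\by\bigr)$. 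Monotonicity and the triangle inequality for $\Phi$ then give
$$
\Phi(\bx\circ\by)\le\tfrac12\bigl(s\,\Phi(\bx\circ\bx)+s^{-1}\Phi(\by\circ\by)\bigr).
$$
If both $\bx,\by\ne\mathbf 0$, take $s=\bigl(\Phi(\by\circ\by)/\Phi(\bx\circ\bx)\bigr)^{1/2}$ to obtain the bound. If $\bx=\mathbf 0$ or $\by=\mathbf 0$, then $\bx\circ\by=\mathbf 0$ and the bound is trivial.

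Substituting, $\Psi(\bx+\by)^2\le\bigl(\Psi(\bx)+\Psi(\by)\bigr)^2$. This completes the verification that $\Psi$ is a symmetric gauge function, and hence the lemma.
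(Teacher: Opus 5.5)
Your proof is correct, but it takes a different route from the paper's. The paper works at the matrix level. It expands $(X+Y)^*(X+Y)$, applies the triangle inequality for $\|\cdot\|_{\UI}$, and reduces everything to the operator Cauchy--Schwarz inequality $\|X^*Y\|_{\UI}\le\|X^*X\|_{\UI}^{1/2}\|Y^*Y\|_{\UI}^{1/2}$, which it imports from Bhatia and Davis; it treats the remaining norm axioms and unitary invariance as evident.

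You instead pass to the gauge function $\Psi(\bx)=[\Phi(\bx\circ\bx)]^{1/2}$ and let von Neumann's correspondence carry the result back to matrices. At the vector level the cross term $\bx\circ\by$ is an entrywise, commutative object. So the needed bound $\Phi(\bx\circ\by)\le\Phi(\bx\circ\bx)^{1/2}\Phi(\by\circ\by)^{1/2}$ follows from entrywise AM--GM, monotonicity of $\Phi$, and optimizing over $s$. You carry all of this out correctly, including the reduction to nonnegative vectors and the degenerate case $\bx=\mathbf 0$ or $\by=\mathbf 0$.

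Your route is self-contained. Its only nontrivial input is von Neumann's theorem, which the paper already relies on, and the non-commutativity that Bhatia--Davis handles is absorbed into that theorem's triangle inequality. The paper's route is shorter given the citation, but it rests on a stronger, matrix-level inequality than this lemma actually needs.
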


\begin{proof}
It suffices to verify the triangle inequality for this $\|\cdot\|$, namely, to show
$$
    \|(X+Y)^*(X+Y)\|_{\UI}^{1/2}\le\|X^*X\|_{\UI}^{1/2}+\|Y^*Y\|_{\UI}^{1/2}.
$$
Since
$\|(X+Y)^*(X+Y)\|_{\UI}\le\|X^*X\|_{\UI}+2\|X^*Y\|_{\UI}+\|Y^*Y\|_{\UI}$, it suffices to show
\begin{equation}\nonumber
\|X^*X\|_{\UI}+2\|X^*Y\|_{\UI}+\|Y^*Y\|_{\UI}
   \le\big(\|X^*X\|_{\UI}^{1/2}+\|Y^*Y\|_{\UI}^{1/2}\big)^2,
\end{equation}
or equivalently $\|X^*Y\|_{\UI}\le\|X^*X\|_{\UI}^{1/2}\|Y^*Y\|_{\UI}^{1/2}$ which is a corollary of the main result
of Bhatia and Davis~\cite{bhda:1995}.
\end{proof}

With this lemma, it can be seen that the class of $Q$-norms
contains all Schatten $p$-norms for $2\le p\le\infty$; in
particular, it contains the Frobenius norm and the spectral norm.
The Schatten $p$-norm, denoted by
$\|\cdot\|_{S_p}$, is defined as
\[
\|Y\|_{S_p}=\left(\sum_{i=1}^N[\sigma_i(Y)]^p\right)^{1/p}
\quad\mbox{for $1\le p<\infty$, and}\quad
\|Y\|_{S_\infty}= \sigma_1(Y)=\|Y\|_2.
\]
Evidently, $\|Y\|_{S_p}=\bigl(\|Y^*Y\|_{S_{p/2}}\bigr)^{1/2}$ for $2\le p\le \infty$, and thus
the Schatten $p$-norm with $2\le p\le\infty$ is a $Q$-norm. In
particular, $\|Y\|_{\F}=\|Y\|_{S_2}
=\bigl(\|Y^*Y\|_{S_1}\bigr)^{1/2}$ and
$\|Y\|_2=\|Y\|_{S_\infty}$ are two $Q$-norms.

Our main result for the section is to show that  inequality \eqref{eq:main} holds for any $Q$-norm. 
\begin{theorem}\label{thm:main-Q}
Let $A$ and $\wtd A$ be as described in \cref{sec:intro}.
Inequality \eqref{eq:main} holds for any $Q$-norm $\|\cdot\|_{\UI}$, namely
\begin{equation}\nonumber
\big\|\diag\big(\blambda(A)-\blambda(\wtd A)\big)\big\|_{Q}
\le \phi(\eta,\epsilon_2)\,\|V\|_{Q}.
\end{equation}
\end{theorem}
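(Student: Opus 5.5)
The plan is to run the homotopy $H(t)=\wtd A+tV$ of \eqref{eq:H(t)}, bound the $Q$-norm of the eigenvalue velocity pointwise in $t$ by $c(t)\|V\|_Q$, and integrate using \eqref{eq:intc}.

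Let $\Phi_Q$ and $\Phi'$ be the symmetric gauge functions with $\Phi_Q(\bx)=[\Phi'(\bx\circ\bx)]^{1/2}$. By \Cref{lem:derivative} and \Cref{lem:length} (with the norm $\Phi_Q$ on $\bbR^N$),
$$
\big\|\diag\big(\blambda(A)-\blambda(\wtd A)\big)\big\|_Q=\Phi_Q(\blambda(1)-\blambda(0))\le\int_0^1\Phi_Q(\pmb{z}(t))\,\rmd t ,
$$
where $\pmb{z}(t)=[z_1(t),\ldots,z_N(t)]^{\T}$ and $z_j(t)=\bx_j(t)^*V\bx_j(t)$. So it suffices to prove, for almost every $t$,
$$
\Phi_Q(\pmb{z}(t))\le c(t)\,\|V\|_Q .
$$

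I would get this from a pointwise inequality
$$
z_j(t)^2\le c(t)^2\,\bx_j(t)^*V^2\bx_j(t)\quad\text{for each } j. \tag{$\ast$}
$$
Granting $(\ast)$, let $X=[\bx_1,\ldots,\bx_N]$, which is unitary. The vector $\pmb{z}\circ\pmb{z}$ is entrywise dominated by $c^2$ times the diagonal of $X^*V^2X$. By Schur's theorem, that diagonal is majorized by $\eig(V^2)=\bsigma(V)\circ\bsigma(V)$. A symmetric gauge function is monotone in absolute values and Schur-convex, so
$$
\Phi'(\pmb{z}\circ\pmb{z})\le c^2\,\Phi'(\bsigma(V)\circ\bsigma(V))=c^2\,\|V\|_Q^2 .
$$
Taking square roots gives the claim, and the case $\eta=0$, $c=1$, of $(\ast)$ is just Cauchy--Schwarz.

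To prove $(\ast)$ for $\eta>0$, write $\bx_j=[\bu;\bw]$ with $\|\bu\|^2+\|\bw\|^2=1$ and $\lambda=\lambda_j(t)$. Put $a=\|E\bu\|$ and $b=\|E^*\bw\|$. Then $z_j=2\Rea(\bw^*E\bu)$ and $\bx_j^*V^2\bx_j=a^2+b^2$. The eigen-equations give $(H_1-\lambda)\bu=-tE^*\bw$ and $(H_2-\lambda)\bw=-tE\bu$. With $\delta_i=\operatorname{dist}(\lambda,\eig(H_i))$ this yields
$$
\delta_1\|\bu\|\le tb,\qquad \delta_2\|\bw\|\le ta,\qquad \delta_1+\delta_2\ge\eta,
$$
the last by the triangle inequality. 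Cauchy--Schwarz also gives $|z_j|\le 2\|\bw\|a$ and $|z_j|\le 2\|\bu\|b$.

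The remaining task, which I expect to be the main obstacle, is a scalar optimization: maximize $z_j^2/(a^2+b^2)$ under these constraints and show the maximum is $c(t)^2=4t^2\epsilon_2^2/(\eta^2+4t^2\epsilon_2^2)$. One also needs $a,b\le\epsilon_2$, that is, $a\le\epsilon_2\|\bu\|$ and $b\le\epsilon_2\|\bw\|$. I would first try the chain
$$
z_j^2\le 4\min\{\|\bw\|^2a^2,\ \|\bu\|^2b^2\}\le 4\|\bu\|^2\|\bw\|^2\,\frac{\dots}{\dots},
$$
combined with $\|\bu\|\|\bw\|\,\eta\le\|\bu\|\|\bw\|(\delta_1+\delta_2)\le t(\|\bw\|b+\|\bu\|a)$. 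This reduces to the extremal $2\times2$ case, where $\bu,\bw$ are aligned with a top singular pair of $E$ and everything is explicit. If that chain leaves a slack, I would fall back on the Li--Li argument \cite{lili:2005} to control $\|\bu\|\|\bw\|$ through the gap, which gives $2\|\bu\|\|\bw\|\le c(t)$ at the relevant $t$, and combine it with $z_j^2\le 4\|\bu\|\|\bw\|ab\le 2\|\bu\|\|\bw\|(a^2+b^2)$.
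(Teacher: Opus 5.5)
Your outer argument is the paper's: the homotopy, Lemma~\ref{lem:length} applied with $\Phi_Q$, the reduction to the pointwise bound $(\ast)$, and Schur majorization of the diagonal of $X^*V^2X$ correspond to Lemmas~\ref{lem:derivative}, \ref{lem:length} and~\ref{lem:l2}. The gap is in $(\ast)$ itself, which is the real content of the theorem (the paper's Lemma~\ref{lem:scalar}). Neither of your two routes proves it.

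\textbf{First route.} The constraints you keep are too weak. Write $p=\|\bu\|_2$ and $q=\|\pmb{w}\|_2$, and suppose $2t\epsilon_2\le\eta$. Choose $pq=t\epsilon_2/\eta$, $a=\epsilon_2p$, $b=\epsilon_2q$, $\delta_1=t\epsilon_2q/p$, $\delta_2=t\epsilon_2p/q$ and $z=2\epsilon_2pq$. This satisfies every inequality you list, with $\delta_1+\delta_2=\eta$. Yet $z^2/(a^2+b^2)=4p^2q^2=4t^2\epsilon_2^2/\eta^2>c(t)^2$. So no scalar optimization over these constraints can return $c(t)^2$. The best it gives is $|z|\le\min\{1,2t\epsilon_2/\eta\}\|V\bx\|_2$, which integrates to $\epsilon_2/\eta>\phi(\eta,\epsilon_2)$ when $2\epsilon_2\le\eta$; that is only the classical quadratic bound. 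The $2\times2$ case is not extremal for your relaxation. There $\lambda$ lies outside the interval spanned by $\alpha$ and $\beta$, with $\delta_2-\delta_1=\eta$, whereas your relaxation lets $\lambda$ sit between the spectra with $\delta_1+\delta_2=\eta$.

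\textbf{Fallback route.} The claim $2pq\le c(t)$ is false in general. Take $\eta=\epsilon_2=t=1$, $H_1=[0]$, $H_2=\diag(1,-1)$ and $E=\frac{1}{\sqrt2}[1,1]^{\T}$. Then $[1,-1/\sqrt2,1/\sqrt2]^{\T}/\sqrt2$ is an eigenvector of $H(1)$ for the eigenvalue $0$. It has $p=q=1/\sqrt2$, so $2pq=1>2/\sqrt5=c(1)$. Even granting $2pq\le c(t)$, your chain $z^2\le2pq(a^2+b^2)\le c(t)(a^2+b^2)$ only yields $|z|\le\sqrt{c(t)}\,\|V\bx\|_2$, a square root short. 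The AM--GM step $2ab\le a^2+b^2$ already loses a factor $c(t)$ in the $2\times2$ case, where $2ab=c(t)(a^2+b^2)$.

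\textbf{The missing idea.} Measure the residual at the Rayleigh quotient $\rho=\bx^*\wtd A\bx$ rather than at $\lambda$. Since $\lambda-\rho=tz$ and $\bx\perp(\wtd A-\rho I)\bx$, you get
\[
t^2\|V\bx\|_2^2=\|(\wtd A-\rho I)\bx\|_2^2+t^2z^2 .
\]
The two-spectra variance bound (Lemma~\ref{lem:var}) gives $\|(\wtd A-\rho I)\bx\|_2^2\ge\eta^2p^2q^2$. Together with $z^2\le4\epsilon_2^2p^2q^2$, this becomes $\|(\wtd A-\rho I)\bx\|_2^2\ge\eta^2z^2/(4\epsilon_2^2)$. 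Substituting yields $z^2(\eta^2+4t^2\epsilon_2^2)\le4t^2\epsilon_2^2\|V\bx\|_2^2$, which is $(\ast)$.

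Your distances-at-$\lambda$ argument recovers only $t^2\|V\bx\|_2^2\ge\delta_1^2p^2+\delta_2^2q^2\ge\eta^2p^2q^2$. The term $t^2z^2$ is lost, and it is exactly that term which turns the denominator $\eta^2$ into $\eta^2+4t^2\epsilon_2^2$.
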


To prove \Cref{thm:main-Q}, we need to establish several technical lemmas. 
\begin{lemma}  
\label{lem:var}
For a unit vector $\bx=\begin{bmatrix}
                          \bu \\
                          \bv
                        \end{bmatrix}\in\bbC^N$ with $\bu\in\bbC^m$ and $\bv\in\bbC^n$, set
\begin{equation}
\nonumber
\mu=\|\bu\|_2^2\in[0,1],
\quad
\rho:=\bx^*\wtd A\bx=\bu^*H_1\bu+\bv^*H_2\bv.
\end{equation}
Then 
\begin{equation}\label{eq:var}
\|(\wtd A-\rho I)\bx\|_2^2
  =\bu^*(H_1-\rho I)^2\bu+\bv^*(H_2-\rho I)^2\bv
  \ge\eta^2 \mu(1-\mu),
\end{equation}
where $\eta$ is defined as in \eqref{eq:gaps-2}, and each $I$ is an identity matrix of  appropriate size.
\end{lemma}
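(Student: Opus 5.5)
The equality in \eqref{eq:var} follows directly from the block-diagonal structure of $\wtd A$. We have
$$
(\wtd A-\rho I)\bx=\begin{bmatrix}(H_1-\rho I)\bu\\ (H_2-\rho I)\bv\end{bmatrix},
$$
so its squared Euclidean norm is $\bu^*(H_1-\rho I)^2\bu+\bv^*(H_2-\rho I)^2\bv$, because $H_i-\rho I$ is Hermitian. All the work is therefore in the lower bound, and I plan to obtain it by a weighted-variance argument.

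First dispose of the degenerate cases. If $\mu\in\{0,1\}$ or $\eta=0$, the right-hand side is $0$ and there is nothing to prove. Now assume $0<\mu<1$ and $\eta>0$, and normalize $\hat\bu=\bu/\|\bu\|_2$ and $\hat\bv=\bv/\|\bv\|_2$, so that $\|\bv\|_2^2=1-\mu$. Set $a:=\hat\bu^*H_1\hat\bu$ and $b:=\hat\bv^*H_2\hat\bv$; then $\rho=\mu a+(1-\mu)b$. By Cauchy--Schwarz (equivalently, the variance decomposition),
$$
\hat\bu^*(H_1-\rho I)^2\hat\bu\ge \bigl(\hat\bu^*(H_1-\rho I)\hat\bu\bigr)^2=(a-\rho)^2,
$$
and similarly for $\hat\bv$. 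This gives
$$
\|(\wtd A-\rho I)\bx\|_2^2\ge \mu(a-\rho)^2+(1-\mu)(b-\rho)^2=\mu(1-\mu)(a-b)^2,
$$
where the last identity uses $a-\rho=(1-\mu)(a-b)$ and $b-\rho=-\mu(a-b)$.

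The main obstacle is that $a$ and $b$ are Rayleigh quotients lying in the convex hulls $[\lambda_{\min}(H_1),\lambda_{\max}(H_1)]$ and $[\lambda_{\min}(H_2),\lambda_{\max}(H_2)]$. When the two spectra interlace, these intervals overlap even though $\eta>0$, so $|a-b|\ge\eta$ can fail and the crude bound above is insufficient. I will therefore use a sharper, pointwise argument. Write $\hat\bu$ in an eigenbasis of $H_1$ with weights $p_k\ge0$ summing to $1$ on the eigenvalues $\alpha_k$, and $\hat\bv$ in an eigenbasis of $H_2$ with weights $q_l$ on the eigenvalues $\beta_l$. The left-hand side becomes the second moment about $\rho$ of a probability measure on the real line that puts mass $\mu p_k$ at $\alpha_k$ and mass $(1-\mu)q_l$ at $\beta_l$.

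Since $\rho$ is the mean of this measure, the left-hand side is its variance, and I will bound the variance below by $\mu(1-\mu)\eta^2$. The tool is the pairing formula
$$
\mathrm{Var}=\tfrac12\,\mathbb{E}\,|X-Y|^2
$$
for $X,Y$ i.i.d.\ from the measure. Keep only the cross terms between an $\alpha$-point and a $\beta$-point: these occur with total probability $2\mu(1-\mu)$, and each has $|\alpha_k-\beta_l|\ge\eta$. Hence
$$
\mathrm{Var}\ge \tfrac12\cdot 2\mu(1-\mu)\eta^2=\mu(1-\mu)\eta^2,
$$
which is \eqref{eq:var}. This pairing step is the crux; it avoids any need for $|a-b|\ge\eta$, and it works whether or not the spectra interlace.
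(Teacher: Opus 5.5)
Your proof is correct and is essentially the paper's argument: the paper also expands $\bu,\bv$ in eigenbases of $H_1,H_2$ and treats $\|(\wtd A-\rho I)\bx\|_2^2$ as the variance of the combined weights $\{\mu_i\}\cup\{\nu_j\}$ about their mean $\rho$. It then applies the same pairing identity $\sum_k\omega_k(t_k-\bar t)^2=\frac12\sum_{k,\ell}\omega_k\omega_\ell(t_k-t_\ell)^2$ and keeps only the $\alpha$--$\beta$ cross terms, so the Cauchy--Schwarz detour through the Rayleigh quotients $a,b$ (which you rightly recognize is insufficient) can be dropped, as can the normalization of $\bu,\bv$.
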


\begin{proof}
Let the eigen-decompositions of $H_1$ and $H_2$ be
$$
H_1=Q_1\Lambda_1 Q_1^*,
\quad
H_2=Q_2\Lambda_2 Q_2^*,
$$
respectively,
where $Q_1\in\bbC^{m\times m}$ and $Q_2\in\bbC^{n\times n}$ are unitary,
and $\Lambda_1=\diag(\alpha_1,\ldots,\alpha_m)$ and
$\Lambda_2=\diag(\beta_1,\ldots,\beta_n)$ are diagonal with diagonal entries being the eigenvalues of $H_1$ and $H_2$, respectively.
Set
$$
\widehat\bu:=Q_1^*\bu= [\what u_1,\dots,\what u_m]^{\T}\in\bbC^m,
\quad
\widehat\bv:=Q_2^*\bv= [\what v_1,\dots,\what v_n]^{\T}\in\bbC^n,
$$
and write $\mu_i=|\widehat u_i|^2$ for $i=1,2,\ldots,m$ and $\nu_j=|\widehat v_j|^2$  for $j=1,2,\ldots,n$.
Then $\sum_i \mu_i=\mu$, $\sum_j\nu_j=1-\mu$, and
\begin{align}
\nonumber
\rho
&=
\sum_{i=1}^m \mu_i\alpha_i+\sum_{j=1}^n\nu_j\beta_j,\\
\nonumber
\|(\wtd A-\rho I)\bx\|_2^2
&=
\sum_{i=1}^m \mu_i(\alpha_i-\rho)^2+\sum_{j=1}^n\nu_j(\beta_j-\rho)^2.
\end{align}
For arbitrary real weights $\omega_k\ge0$ with $\sum_k \omega_k=1$ and real nodes $t_k$,
the elementary identity
\begin{equation}
\label{eq:pair}
\sum_k \omega_k(t_k-\bar t)^2
=
\frac12\sum_{k,\ell}\omega_k \omega_{\ell}(t_k-t_{\ell})^2
\quad\mbox{with}\quad
\bar t:=\sum_k \omega_k t_k
\end{equation}
can be seen to hold by expanding the right-hand side.
Apply \eqref{eq:pair} to the combined weights
$\{\mu_i\}\cup\{\nu_j\}$ and nodes $\{\alpha_i\}\cup\{\beta_j\}$ to get
$$
\|(\wtd A-\rho I)\bx\|_2^2
=
\frac12\sum_{i,i'}\mu_i\mu_{i'}(\alpha_i-\alpha_{i'})^2
+\frac12\sum_{j,j'}\nu_j\nu_{j'}(\beta_j-\beta_{j'})^2
+\sum_{i,j}\mu_i\nu_j(\alpha_i-\beta_j)^2,
$$
in which all summands are nonnegative. Retaining only the cross terms between the
$\alpha$-nodes and the $\beta$-nodes, and using $|\alpha_i-\beta_j|\ge\eta$, we obtain
$$
\|(\wtd A-\rho I)\bx\|_2^2
\ge
\sum_{i,j}\mu_i\nu_j\eta^2
=
\eta^2\Bigl(\sum_i \mu_i\Bigr)\Bigl(\sum_j\nu_j\Bigr)
=
\eta^2 \mu(1-\mu),
$$
which is~\eqref{eq:var}.
\end{proof}

\begin{lemma}
\label{lem:scalar}
Fix $t\in[0,1]$ and let $\bx(t)\in\bbC^N$ be a unit eigenvector of $H(t)$ in \eqref{eq:H(t)}.
Partition $\bx(t)=\begin{bmatrix}
                          \bu(t) \\
                          \bv(t)
                        \end{bmatrix}\in\bbC^N$ with $\bu(t)\in\bbC^m$ and $\bv(t)\in\bbC^n$, and set
\[
\mu(t):=\|\bu(t)\|_2^2\in[0,1],
\quad
z(t):=\bx(t)^*V\bx(t).
\]
Then, with $c(t)$ given in \eqref{eq:c}, it holds that
\begin{equation}
\label{eq:scalar}
|z(t)|\le c(t)\,\|V\bx(t)\|_2.
\end{equation}
\end{lemma}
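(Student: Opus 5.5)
The proof works directly with the eigen-equation of $H(t)$. It combines three ingredients: the variance bound of \Cref{lem:var}, an elementary estimate of $z(t)$ in terms of $\mu(t)$, and the fact that $\|V\bx\|_2\le\epsilon_2$.

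\textbf{Trivial cases.} Write $\bx=\bx(t)$, $\bu=\bu(t)$, $\bv=\bv(t)$, $\mu=\mu(t)$, $z=z(t)$, and $w:=\|V\bx\|_2$. Because $z=\bx^*V\bx$ is real and $\bx$ is a unit vector, Cauchy--Schwarz gives $|z|\le w$. This settles the case $\eta=0$, where $c(t)=1$. It also settles $w=0$, since then $z=0$. Finally, if $t=0$ and $\eta>0$, then $\bx$ is an eigenvector of $\wtd A$ whose eigenvalue lies in only one of $\eig(H_1)$, $\eig(H_2)$. Hence $\bu=0$ or $\bv=0$, so $z=2\Rea(\bv^*E\bu)=0=c(0)w$. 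From now on assume $\eta>0$, $t>0$ and $w>0$.

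\textbf{Step 1 (residual identity).} Let $\lambda$ be the eigenvalue, so $(\wtd A+tV)\bx=\lambda\bx$. Taking the Rayleigh quotient gives $\rho=\bx^*\wtd A\bx=\lambda-tz$. Therefore
\[
(\wtd A-\rho I)\bx=t\,(z\bx-V\bx).
\]
Since $z\bx$ is the orthogonal projection of $V\bx$ onto $\bx$, we get $\|(\wtd A-\rho I)\bx\|_2^2=t^2(w^2-z^2)$. Combining this with \Cref{lem:var} yields
\[
t^2(w^2-z^2)\ge\eta^2\mu(1-\mu).
\]

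\textbf{Step 2 (bounding $z$ via $\mu$).} We have $z=2\Rea(\bv^*E\bu)=2\Rea\big((E^*\bv)^*\bu\big)$. This gives two bounds:
\[
z^2\le 4(1-\mu)\|E\bu\|_2^2 \quad\text{and}\quad z^2\le 4\mu\|E^*\bv\|_2^2 .
\]
The minimum of two numbers is at most any convex combination of them. Use the weights $\mu$ and $1-\mu$ respectively:
\[
z^2\le 4\mu(1-\mu)\big(\|E\bu\|_2^2+\|E^*\bv\|_2^2\big)=4\mu(1-\mu)\,w^2 .
\]
Hence $\mu(1-\mu)\ge z^2/(4w^2)$. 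I expect this step to be the main obstacle: it is where the block structure must produce exactly the factor $4\mu(1-\mu)$ that matches \Cref{lem:var}.

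\textbf{Step 3 (solve and use $w\le\epsilon_2$).} Substituting Step 2 into Step 1 gives $t^2(w^2-z^2)\ge \eta^2 z^2/(4w^2)$. Solving for $z^2$:
\[
z^2\le \frac{4t^2w^4}{4t^2w^2+\eta^2}=w^2\cdot\frac{4t^2w^2}{4t^2w^2+\eta^2}.
\]
The map $s\mapsto s/(4t^2s+\eta^2)$ is increasing for $s\ge 0$, and $w\le\|V\|_2=\epsilon_2$. Therefore the last factor is at most $4t^2\epsilon_2^2/(4t^2\epsilon_2^2+\eta^2)=c(t)^2$. Taking square roots gives \eqref{eq:scalar}.
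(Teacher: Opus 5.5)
Your proof is correct and follows the paper's argument: the residual identity $t^2(\|V\bx\|_2^2-z^2)=\|(\wtd A-\rho I)\bx\|_2^2$, then \Cref{lem:var}, then a bound of $z^2$ by a multiple of $\mu(1-\mu)$, then elimination of $\mu$. The only difference is in your Step~2: the paper uses $|z|\le 2\|E\|_2\|\bu\|_2\|\bv\|_2$ to get $z^2\le 4\epsilon_2^2\mu(1-\mu)$ in one line, so your convex-combination trick, the final monotonicity step, and the separate $t=0$ case are not needed (your route does give a slightly sharper intermediate bound, with $\|V\bx\|_2$ in place of $\epsilon_2$ inside $c(t)$, which you then discard).
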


\begin{proof}
If $\epsilon_2=0$ then $V=0$, so $z(t)=0$ and~\eqref{eq:scalar} holds.
Assume henceforth $\epsilon_2>0$.
{If $\eta=0$, then $c(t)=1$ for all $t\in[0,1]$ under the definition
in~\eqref{eq:c}, and Cauchy--Schwarz gives
$|z(t)|=|\bx(t)^*V\bx(t)|\le\|V\bx(t)\|_2$. Thus~\eqref{eq:scalar}
holds in this case. Assume henceforth that $\eta>0$, so that
$\eta^2+4t^2\epsilon_2^2>0$ for every $t\in[0,1]$.}

{For notational concision, we suppress the dependence of $\bx(t)$,
$\bu(t)$, and $\bv(t)$ on $t$ and write $\bx$, $\bu$, and $\bv$, respectively.}
Let $\lambda\in\bbR$ be the eigenvalue of $H(t)$ associated with $\bx$, i.e., $H(t)\bx=\lambda\bx$.
{The same convention is used for the $t$-dependent eigenvalue $\lambda$.}
Set
$\rho:=\bx^*\wtd A\bx$. Then $H(t)=\wtd A+tV$ gives the residual identity
\begin{equation}
\label{eq:Dlam}
(\wtd A-\lambda I)\bx=-tV\bx
\quad\Rightarrow\quad
\|(\wtd A-\lambda I)\bx\|_2^2=t^2\|V\bx\|_2^2=t^2\,\bx^*V^2\bx.
\end{equation}
Noticing $\lambda-\rho=t\bx^*V\bx=t\,z(t)$, we get
$$
\wtd A\bx-\lambda \bx
=\wtd A\bx-\rho\bx -(\lambda-\rho) \bx
=(\wtd A\bx-\rho \bx)-t\,z(t)\,\bx
$$
Since $\bx^*(\wtd A\bx-\rho \bx)=0$, we conclude
\begin{align*}
&\|(\wtd A-\lambda I)\bx\|_2^2
    =\|(\wtd A-\rho I)\bx\|_2^2+t^2[z(t)]^2 \nonumber \\
\Rightarrow\quad& t^2\,\bx^*V^2\bx=\|(\wtd A-\rho I)\bx\|_2^2+t^2[z(t)]^2 \qquad(\mbox{by \eqref{eq:Dlam}})\nonumber \\
\Rightarrow\quad& t^2(\bx^*V^2\bx-[z(t)]^2)=\|(\wtd A-\rho I)\bx\|_2^2,
\end{align*}
and hence, by \Cref{lem:var},
\begin{equation}\label{eq:var-id}
t^2(\bx^*V^2\bx-[z(t)]^2)\ge \eta^2 \mu(t)[1-\mu(t)].
\end{equation}

On the other hand, the block form of $V$ yields
$
z(t)=\bx^*V\bx=2\Rea(\bu^*E^*\bv),
$
where $\Rea(\cdot)$ takes the real part of a complex number.
Hence
$$
|z(t)|\le 2\|E\|_2\,\|\bu\|_2\|\bv\|_2
       =2\epsilon_2\sqrt{\mu(t)[1-\mu(t)]},
$$
yielding  $[z(t)]^2\le 4\epsilon_2^2\,\mu(t)[1-\mu(t)]$, which, together with \eqref{eq:var-id}, leads to
\begin{align*}
   & t^2(\bx^*V^2\bx-[z(t)]^2)\ge \eta^2\cdot\frac {[z(t)]^2}{4\epsilon_2^2} \\
\Rightarrow\quad
   & [z(t)]^2\bigl(\eta^2+4t^2\epsilon_2^2\bigr)\le4t^2\epsilon_2^2\,\bx^*V^2\bx \\
\Rightarrow\quad
   &  [z(t)]^2\le[c(t)]^2\,\bx^*V^2\bx
               =[c(t)]^2\|V\bx\|_2^2,
\end{align*}
which is~\eqref{eq:scalar}.
\end{proof}

\begin{lemma}
\label{lem:l2}
Let $\|\cdot\|_{Q}$ be any $Q$-norm on $\bbC^{N\times N}$ and $\Phi_{Q}$ its corresponding symmetric gauge function, and
let $\blambda(t)=[\lambda_1(t),\ldots,\lambda_N(t)]^{\T}$ be the one in \eqref{eq:blambda}.
For almost every $t\in[0,1]$,
\begin{equation}
\nonumber
\Phi_{Q}\biggl(
\frac{\rmd}{\rmd\!t}\blambda(t)
\biggr)
\le
c(t)\,\|V\|_{Q},
\end{equation}
where $c(t)$ is given in \eqref{eq:c}.
\end{lemma}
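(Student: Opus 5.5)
Fix a point $t$ at which all $\lambda_j(\cdot)$ are differentiable, and take the orthonormal eigenbasis $\{\bx_1(t),\ldots,\bx_N(t)\}$ supplied by \Cref{lem:derivative}, so that $\lambda_j'(t)=z_j(t)=\bx_j(t)^*V\bx_j(t)$. Let $X=[\bx_1(t),\ldots,\bx_N(t)]$, which is unitary. Write $\|\cdot\|_Q=(\|(\cdot)^*(\cdot)\|_{\UI'})^{1/2}$, so that $\Phi_Q(\bx)=[\Phi_{\UI'}(\bx\circ\bx)]^{1/2}$. Squaring the target inequality, it suffices to show
\begin{equation*}
\Phi_{\UI'}\bigl(z_1(t)^2,\ldots,z_N(t)^2\bigr)\le c(t)^2\,\|V^2\|_{\UI'}=c(t)^2\,\|V\|_Q^2 ,
\end{equation*}
where we used $V^*V=V^2$.

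\textbf{Step 1 (entrywise bound).} By \Cref{lem:scalar} applied to each eigenvector $\bx_j(t)$ of $H(t)$,
\begin{equation*}
z_j(t)^2\le c(t)^2\,\|V\bx_j(t)\|_2^2=c(t)^2\,\bx_j(t)^*V^2\bx_j(t)=c(t)^2\,(X^*V^2X)_{jj}.
\end{equation*}
The entries on both sides are nonnegative. A symmetric gauge function is monotone in the absolute values of its arguments; this follows from items (3) and (5) of \Cref{def:SGF} and is standard. Hence
\begin{equation*}
\Phi_{\UI'}(\bz\circ\bz)\le c(t)^2\,\Phi_{\UI'}\bigl(\mathrm{d}\bigr),
\end{equation*}
where $\mathrm{d}$ is the vector of diagonal entries of the positive semidefinite matrix $X^*V^2X$.

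\textbf{Step 2 (diagonal versus whole matrix).} I would use the classical fact that for any matrix $M$ and any unitarily invariant norm, $\|\diag(M_{11},\ldots,M_{NN})\|_{\UI'}\le\|M\|_{\UI'}$ (pinching inequality). For Hermitian $M$ this can alternatively be seen from Schur's majorization: the diagonal is majorized by the eigenvalues, and a symmetric gauge function is Schur-convex. Applying it with $M=X^*V^2X$ gives
\begin{equation*}
\Phi_{\UI'}(\mathrm{d})\le\|X^*V^2X\|_{\UI'}=\|V^2\|_{\UI'},
\end{equation*}
by unitary invariance. Combining Steps 1 and 2 yields the squared inequality, and taking square roots gives the claim for almost every $t$.

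The only genuinely delicate point is the passage from $z_j^2$ to the $\UI'$-norm. Without the $Q$-structure, we would face $\Phi(|z_j|)$ against $\Phi(\|V\bx_j\|_2)$, and a vector of column norms is \emph{not} dominated by the singular values in every unitarily invariant norm. The squaring turns column norms into the diagonal entries of $X^*V^2X$, where the pinching inequality applies. So Step 2, together with the monotonicity used in Step 1, is where the $Q$-norm hypothesis is essential. The remaining care is only the almost-everywhere choice of basis, which \Cref{lem:derivative} already provides.
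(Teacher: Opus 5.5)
Your proof is correct and follows essentially the same route as the paper: you bound each entry by $z_j(t)^2\le c(t)^2\,\bx_j(t)^*V^2\bx_j(t)$ using \Cref{lem:scalar}, apply monotonicity of $\Phi_{\UI'}$, and then dominate the diagonal of $X^*V^2X$ by $\bigl[(\sigma_1(V))^2,\ldots,(\sigma_N(V))^2\bigr]^{\T}$ via Schur--Horn majorization, which is equivalent to the pinching inequality you cite. One small point: the monotonicity of a symmetric gauge function also needs homogeneity (item (2) of \Cref{def:SGF}), not only items (3) and (5).
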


\begin{proof}
Let
$\bx_j(t)$ and $z_j(t)$ be as in \Cref{lem:derivative} at those $t\in[0,1]$ where
$\lambda_1(\cdot),\ldots,\lambda_N(\cdot)$ are differentiable.
\Cref{lem:scalar} yields $|z_j(t)|\le c(t)\|V\bx_j(t)\|_2$ for each $j$,
and
hence
$$
[z_j(t)]^2
\le
[c(t)]^2 \|V\bx_j(t)\|_2^2
=
[c(t)]^2 \bx_j(t)^*V^2\bx_j(t).
$$
{Every symmetric gauge function is monotone under weak majorization; in particular, $\boldsymbol{0}\le\bx\le\by$ componentwise implies
$\Phi(\bx)\le\Phi(\by)$.}
Since $\|\cdot\|_{Q}$ is a $Q$-norm, it has an associated unitarily invariant norm $\|\cdot\|_{\UI'}$
such that $\|Y\|_{Q} = (\|Y^*Y\|_{\UI'})^{1/2}$ for all $Y\in\bbC^{N\times N}$. Let $\Phi_{\UI'}$ be
the symmetric gauge function associated with $\|\cdot\|_{\UI'}$. We have
\begin{align*}
\Phi_{Q}\biggl(
\frac{\rmd}{\rmd\!t}\blambda(t)
\biggr) &=\Phi_{Q}(z_1(t),\ldots,z_N(t))\\
   &=\big[\Phi_{\UI'}\big([z_1(t)]^2,\ldots,[z_N(t)]^2\big)\big]^{1/2}\\
   &\le c(t)\big[\Phi_{\UI'}\big(\bx_1(t)^*V^2\bx_1(t),\ldots,\bx_N(t)^*V^2\bx_N(t)\big)\big]^{1/2}.
\end{align*}
Let $X(t)=[\bx_1(t),\ldots,\bx_N(t)]\in\bbC^{N\times N}$. It can be seen that $X(t)$ is unitary.
{By the Schur-Horn theorem {(see e.g., \cite[Exercise~II.1.12]{bhat:1996}), the vector of diagonal entries of the
Hermitian matrix $X(t)^*V^2X(t)$ is majorized by its eigenvalue vector}
$
{\bigl[(\sigma_1(V))^2,\ldots,(\sigma_N(V))^2\bigr]^{\T}}.
$ The weak-majorization monotonicity
noted above therefore gives}
\begin{align*}
\Phi_{Q}\biggl(
\frac{\rmd}{\rmd\!t}\blambda(t)
\biggr)
   &\le c(t)\big[\Phi_{\UI'}\big(\bx_1(t)^*V^2\bx_1(t),\ldots,\bx_N(t)^*V^2\bx_N(t)\big)\big]^{1/2} \\
   &\le c(t)\big[\Phi_{\UI'}\big((\sigma_1(V))^2,\ldots,(\sigma_N(V))^2\big)\big]^{1/2} \\
   &=c(t)\Phi_{Q}\big(\sigma_1(V),\ldots,\sigma_N(V)\big) \\
   &=c(t)\|V\|_{Q},
\end{align*}
as was to be shown.
\end{proof}
 
Now we are ready to prove \Cref{thm:main-Q}.

\begin{proof}[Proof of Theorem~\ref{thm:main-Q}]
With $\bgamma(t):=\blambda(t)\in\bbR^N$,
Lemmas~\ref{lem:derivative} and~\ref{lem:length} together with
\Cref{lem:l2} give
\begin{align*}
\Phi_{Q}(\blambda(A)-\blambda(\wtd A))
=
\Phi_{Q}(\bgamma(1)-\bgamma(0))
\le
\int_0^1 \Phi_{Q}(\bgamma'(t))\,\rmd\!t
\le
\int_0^1 c(t)\,\|V\|_{Q}\,\rmd\!t
=
\|V\|_{Q}\int_0^1 c(t)\,\rmd\!t.
\end{align*}
{Using~\eqref{eq:intc}, we obtain}
\[
{
\Phi_{Q}(\blambda(A)-\blambda(\wtd A))
\le \phi(\eta,\epsilon_2)\,\|V\|_{Q}}.
\]
{Since
$\Phi_{Q}(\blambda(A)-\blambda(\wtd A))
=\|\diag(\blambda(A)-\blambda(\wtd A))\|_{Q}$,
this is~\eqref{eq:main}.}
\end{proof}
 
Both $\|\cdot\|_2$ and $\|\cdot\|_{\F}$ are $Q$-norms, and hence \Cref{thm:main-Q} is applicable for $\|\cdot\|_2$ and $\|\cdot\|_{\F}$.
Specializing \eqref{eq:main} to $\|\cdot\|_2$ and $\|\cdot\|_{\F}$ gives \eqref{eq:lili2005b}
and
\begin{equation}\label{eq:zhli2026}
\|\blambda(A)-\blambda(\wtd A)\|_2
\le \phi(\eta,\epsilon_2)\,\|V\|_{\F}
=\phi(\eta,\epsilon_2)\sqrt 2\,\epsilon_{\F},
\end{equation}
respectively. Inequality \eqref{eq:lili2005b} is due to Li and Li~\cite{lili:2005} while
\eqref{eq:zhli2026} is new.
 
\begin{example}  
\label{ex:sharp}
Take $m=n=1$, $H_1=[\alpha]$, $H_2=[\beta]$ with $\alpha-\beta=\eta\ge 0$, and $E=[\epsilon]$ with
$\epsilon>0$.
Then
\[
A
=
\begin{bmatrix}\alpha & \epsilon \\ \epsilon&\beta\end{bmatrix},
\quad
\wtd A=\begin{bmatrix}\alpha & 0 \\ 0 &\beta\end{bmatrix},
\]
and the eigenvalues of $A$ are
\[
\lambda_1(A)=\frac{\alpha+\beta+\sqrt{\eta^2+4\epsilon^2}}{2},
\quad
\lambda_2(A)=\frac{\alpha+\beta-\sqrt{\eta^2+4\epsilon^2}}{2},
\]
while $\lambda_1(\wtd A)=\alpha$ and $\lambda_2(\wtd A)=\beta$.
A short computation (as in~\cite[Example~1]{lili:2005}) gives absolute differences
\[
\bigl|\lambda_j(A)-\lambda_j(\wtd A)\bigr|
=
\frac {2\epsilon}{\sqrt{\eta^2+4\epsilon^2}+\eta}\cdot \epsilon
\]
for $j=1,2$.
Hence
\begin{equation}\label{eq:sharp-Q}
\Phi_{Q}(\blambda(A)-\blambda(\wtd A))
=
\frac {2\epsilon}{\sqrt{\eta^2+4\epsilon^2}+\eta}\cdot \Phi_{Q}(\epsilon,\epsilon)
=\frac {2\epsilon}{\sqrt{\eta^2+4\epsilon^2}+\eta}\cdot \left\|\begin{bmatrix}0 & \epsilon \\ \epsilon&0\end{bmatrix}\right\|_{Q}.
\end{equation}
Equality in \eqref{eq:main} holds for this example.
It turns out that, since $\rank(E)=1$, \eqref{eq:sharp-Q} remains valid if $Q$-norm $\|\cdot\|_{Q}$ is replaced with
any unitarily invariant norm $\|\cdot\|_{\UI}$ by the main result of the next section.
\end{example}

\section{Case $\rank(E)\le 1$}
\label{sec:rank1E}

In this section we will show that  \eqref{eq:main} holds for any
unitarily invariant norm if $\rank(E)\le 1$. Of course the case $\rank(E)=0$ is trivial because then $E=0$ and $A=\wtd A$, so what we
really need to worry about is when $\rank(E)=1$. Our main result in this section is

\begin{theorem}\label{thm:main-rank1E}
Let $A$ and $\wtd A$ as described in \cref{sec:intro}. If $\rank(E)\le 1$, then inequality \eqref{eq:main} holds for
any unitarily invariant norm $\|\cdot\|_{\UI}$.
\end{theorem}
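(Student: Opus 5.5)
The plan is to reduce \eqref{eq:main} to a weak majorization statement and then prove that statement with the path $H(t)$ and \Cref{lem:length}. The case $E=0$ is trivial, so assume $\rank(E)=1$. By \Cref{lem:sV}, $\bsigma(V)=\epsilon_2(1,1,0,\ldots,0)$. Write $\boldsymbol{\delta}:=\blambda(A)-\blambda(\wtd A)$. By the Ky Fan dominance theorem, \eqref{eq:main} holds for every unitarily invariant norm if and only if $|\boldsymbol{\delta}|$ is weakly majorized by $\phi(\eta,\epsilon_2)\,\epsilon_2\,(1,1,0,\ldots,0)$. Concretely, we need two facts:
\[
\max_i|\delta_i|\le\phi(\eta,\epsilon_2)\,\epsilon_2
\quad\text{and}\quad
\sum_{i=1}^N|\delta_i|\le 2\,\phi(\eta,\epsilon_2)\,\epsilon_2 .
\]
The first is \eqref{eq:lili2005b}, which is also the spectral-norm case of \Cref{thm:main-Q}. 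For $k\ge2$ every Ky Fan $k$-norm of the dominating vector equals $2\phi\epsilon_2$, so the second inequality covers all of them.

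For the trace-norm inequality, first note that $\sum_i\delta_i=\tr A-\tr\wtd A=\tr V=0$. Hence $\sum_i|\delta_i|=2\sum_i(\delta_i)_+$, and it suffices to show $\sum_i(\delta_i)_+\le\phi(\eta,\epsilon_2)\,\epsilon_2$. Apply \eqref{eq:length-plus} to $\bgamma=\blambda(t)$ and use \Cref{lem:derivative}. This gives
\[
\sum_i(\delta_i)_+\le\int_0^1\sum_j\bigl(z_j(t)\bigr)_+\,\rmd t ,
\]
where the $z_j(t)=\bx_j(t)^*V\bx_j(t)$ are computed in an orthonormal eigenbasis $\{\bx_j(t)\}$ of $H(t)$. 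By \eqref{eq:intc}, everything reduces to the pointwise estimate
\[
\sum_{j:\,z_j(t)>0} z_j(t)\le c(t)\,\epsilon_2\qquad\mbox{for a.e. } t\in[0,1].
\]

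To prove the pointwise estimate, I would use the rank-two structure: $V=\epsilon_2(\bb p\bb p^*-\bb q\bb q^*)$ with orthonormal $\bb p,\bb q$, built from the singular vectors of $E$ and having the form $\frac1{\sqrt2}[\bu_0;\pm\bv_0]$. Set $b_j=|\bb p^*\bx_j|^2$ and $d_j=|\bb q^*\bx_j|^2$. Then $z_j=\epsilon_2(b_j-d_j)$ and $\|V\bx_j\|_2^2=\epsilon_2^2(b_j+d_j)$, with $\sum_jb_j=\sum_jd_j=1$. Let $P$ be the orthogonal projector onto $\mathrm{span}\{\bx_j: z_j>0\}$; this is an invariant subspace of $H(t)$. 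The positive sum is then $\tr(PV)=\epsilon_2(\bb p^*P\bb p-\bb q^*P\bb q)$. This compresses the problem to the two-dimensional space spanned by $P\bb p$ and $P\bb q$. I would then redo the argument of \Cref{lem:scalar} for the invariant subspace $\mathrm{range}(P)$ rather than for a single eigenvector. Concretely, I would apply the residual identity $(\wtd A-\lambda_jI)\bx_j=-tV\bx_j$ on each $\bx_j$ and combine it with the block-mass bound of \Cref{lem:var}. The aim is an inequality of the form $\tr(PV)^2(\eta^2+4t^2\epsilon_2^2)\le4t^2\epsilon_2^2\cdot\epsilon_2^2$, which is exactly $\tr(PV)\le c(t)\epsilon_2$.

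This last step is the main obstacle. Summing \Cref{lem:scalar} directly over $j$ only gives $c(t)\epsilon_2\sum_j\sqrt{b_j+d_j}$, which is too weak because the square roots do not sum to $1$. If the subspace version fails, my fallback would be a more direct route. I would write $A=\wtd A+\epsilon_2\bb p\bb p^*-\epsilon_2\bb q\bb q^*$ and use interlacing for the rank-one positive part to show that the eigenvalue increases are concentrated on one "effective" eigenvalue. I would then bound that increase by the $2\times2$ secular-equation analysis underlying \cite{lili:2005}, which produces $\phi(\eta,\epsilon_2)\epsilon_2$.
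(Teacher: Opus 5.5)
\textbf{There is a genuine gap: the pointwise estimate, which is the heart of the theorem, is not proved.}

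Your reduction matches the paper's. Fan dominance turns \eqref{eq:main} into \eqref{eq:maj}. Then \eqref{eq:length-plus} with $\bgamma=\blambda(t)$, together with \Cref{lem:derivative}, reduces everything to the pointwise bound $\sum_j(z_j(t))_+\le c(t)\epsilon_2$. The right object is the projector $P$ onto $\operatorname{span}\{\bx_j(t): z_j(t)>0\}$, which commutes with $H(t)$ and satisfies $\sum_j(z_j(t))_+=\tr(PV)$. (Using $\tr V=0$ instead of treating the negative parts separately is an immaterial variation.)

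You state the target $\tr(PV)\le c(t)\epsilon_2$ but give no argument for it. The route you sketch, the residual identity on each $\bx_j$ plus \Cref{lem:var}, works one eigenvector at a time. It can only give bounds in terms of the individual $\|V\bx_j\|_2$, whose sum over the range of $P$ is not controlled by $\epsilon_2$, as you note yourself. The fallback is not an argument either. Even for rank-one $E$, the eigenvalue increases are generally spread over several indices: for $H_1=\diag(10,5)$, $H_2=[0]$, $E=e\,[1,\,1]$ with small $e>0$, both $\lambda_1$ and $\lambda_2$ go up. So there is no single ``effective'' eigenvalue to feed into a $2\times2$ secular analysis.

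\textbf{First missing ingredient: a bound on $\tr(PV)$ from the commutation $H(t)P=PH(t)$ applied to the whole projector.}
\begin{itemize}
\item Write $E=\epsilon_2\bb\ba^*$ and partition $P$ as in \eqref{eq:P-proj-1}. The identity $\wtd AP-P\wtd A=-t(VP-PV)$ has $(1,2)$ block $H_1P_{12}-P_{12}H_2=-t\epsilon_2(\ba\wtd\bb^*-\wtd\ba\bb^*)$, with $\wtd\ba=P_{11}\ba$ and $\wtd\bb=P_{22}\bb$.
\item The Davis--Kahan bound (\Cref{lem:gap}) gives $\eta\|P_{12}\|_{\F}\le t\epsilon_2\|\ba\wtd\bb^*-\wtd\ba\bb^*\|_{\F}$.
\item Idempotency, $P_{11}-P_{11}^2=P_{12}P_{12}^*$, bounds the right-hand side by $t\epsilon_2\sqrt{s-2\alpha\beta-2|\gamma|^2}$. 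Here $\alpha,\beta,\gamma$ are the entries of the $2\times2$ compression \eqref{eq:B} and $s=\alpha+\beta$.
\item Combine this with $\tr(PV)=2\epsilon_2\Rea(\gamma)$, $|\gamma|\le\|P_{12}\|_2$ and $|\gamma|^2\le\alpha\beta$. This yields $(\eta^2+4t^2\epsilon_2^2)|\gamma|^2\le t^2\epsilon_2^2 s$, i.e.\ $|\tr(PV)|\le c(t)\epsilon_2\sqrt{s}$ (\Cref{lem:comm}). When $\eta=0$ the same conclusion follows from $|\gamma|^2\le\min\{\alpha\beta,(1-\alpha)(1-\beta)\}$.
\end{itemize}

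\textbf{Second missing ingredient: the complementary projector.} The bound above still does not reach your target, since $s$ can be as large as $2$. Apply the same bound to $I-P$, which also commutes with $H(t)$ and has compression trace $2-s$. Then use $\tr(PV)=-\tr((I-P)V)$ to get $\tr(PV)\le c(t)\epsilon_2\sqrt{\min\{s,2-s\}}\le c(t)\epsilon_2$ (\Cref{lem:os}).

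Neither step appears in your proposal, so as written it establishes only the reduction, not the theorem.
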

 
Recall from \cref{sec:QNorm} that each unitarily invariant norm $\|\cdot\|_{\UI}$ is associated with
a symmetric gauge function $\Phi_{\UI}$ such that
$\|Y\|_{\UI}=\Phi_{\UI}\bigl(\bsigma(Y)\bigr)$. To proceed, we need to briefly introduce the notation of majorization
\cite{bhat:1996}.
A nonnegative vector $\bx\in\bbR^N$ is said to be \emph{weakly majorized} by another nonnegative vector
$\by\in\bbR^N$, written $\bx\prec_w\by$, if the sum of the $k$ largest components of $\bx$ does not
exceed that of $\by$ for each $k=1,2,\ldots,N$.
Fan's dominance theorem~\cite[Theorem~IV.2.2]{bhat:1996} asserts that
$\Phi_{\UI}(\bx)\le\Phi_{\UI}(\by)$ for every symmetric gauge function $\Phi_{\UI}$ if and only if
$\bx\prec_w\by$.
Hence, \eqref{eq:main} in the case of $\rank(E)\le 1$ is equivalent to
\begin{equation}\label{eq:maj}
\bigl|\blambda(A)-\blambda(\wtd A)\bigr|
\prec_w \phi(\eta,\epsilon_2)
[\epsilon_2,\epsilon_2,0,\ldots,0]^{\T}
\end{equation}
which is what will be proved henceforth.

As we commented earlier, the case we need to pay attention to is when
$\rank(E)=1$ (and thus $\epsilon_2>0$) which we will assume throughout the rest of this section.
Write
$$
E=\epsilon_2\,\bb\ba^*
\quad\mbox{with unit vectors $\ba\in\bbC^m$ and $\bb\in\bbC^n$}.
$$
Let
\begin{equation}\nonumber
{\what\ba}:=\begin{bmatrix}\ba\\ \mathbf{0}\end{bmatrix},
\qquad
{\what\bb}:=\begin{bmatrix}\mathbf{0}\\ \bb\end{bmatrix}
\in\bbC^{m+n}.
\end{equation}
Then $V=\epsilon_2({\what\ba}{\what\bb}^*+{\what\bb}{\what\ba}^*)$, and
\Cref{lem:sV} says that the singular values of $V$ are two copies of $\epsilon_2$ and $N-2$ copies of $0$.

Let $P$ be an orthogonal projector on $\bbC^N$, i.e., $P\in\bbC^{N\times N}$ is Hermitian, satisfying $P^2=P$.
Partition $P$, conformally
with~\eqref{eq:Atilde}, as
\begin{subequations}\label{eq:P-proj}
\begin{equation}\label{eq:P-proj-1}
P=\begin{bmatrix}P_{11}&P_{12}\\P_{12}^*&P_{22}\end{bmatrix},
\end{equation}
and define $\alpha$, $\beta$, $\gamma$ and $\what P$ by
\begin{equation}\label{eq:B}
\begin{bmatrix}
    \alpha&\gamma\\
    \overline\gamma&\beta
\end{bmatrix}
 :=\begin{bmatrix}\what\ba&\what\bb\end{bmatrix}^{*}
    P
   \begin{bmatrix}\what\ba&\what\bb\end{bmatrix} =: \what P,
\end{equation}
which is Hermitian with eigenvalues no bigger than $1$ since $[\what\ba,\what\bb]\in\bbC^{N\times 2}$ is orthonormal.
Let $\tr(\cdot)$ be  the matrix trace and 
\begin{equation}\label{eq:P-proj-3}
{\wtd\ba}:=P_{11}\ba, \quad {\wtd\bb}:=P_{22}\bb.
\end{equation}
\end{subequations}
 
\begin{lemma}  
\label{lem:tr}
For any orthogonal projector $P$ as in \eqref{eq:P-proj}, the following statements hold.
\begin{enumerate}[{\rm (a)}]
  \item $\tr(PV)=2\epsilon_2\,\Rea(\gamma)$,
        $0\le s:=\tr(\what P)=\alpha+\beta\le2$, $|\gamma|^2\le \alpha\beta$ and $|\gamma|^2\le(1-\alpha)(1-\beta)$;
  \item $|\gamma|\le\|P_{12}\|_2$ and
\begin{equation}
\label{eq:alg}
\|\ba{\wtd\bb}^*-{\wtd\ba}\bb^*\|_{\F}^2
=
\|{\wtd\bb}\|_2^2+\|{\wtd\ba}\|_2^2-2\alpha\beta
\le
s-2\alpha\beta-2|\gamma|^2.
\end{equation}
\end{enumerate}
\end{lemma}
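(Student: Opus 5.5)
The plan is to prove every claim by direct computation from the definitions in \eqref{eq:P-proj}, using only two facts. First, $0\preceq P\preceq I$ in the Loewner order. Second, the idempotency $P^2=P$, written out in blocks.

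For part (a), write $\alpha=\what\ba^*P\what\ba=\ba^*P_{11}\ba$, $\beta=\bb^*P_{22}\bb$ and $\gamma=\what\ba^*P\what\bb=\ba^*P_{12}\bb$.

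\begin{enumerate}[(i)]
  \item From $V=\epsilon_2(\what\ba\what\bb^*+\what\bb\what\ba^*)$ and cyclicity of the trace,
  \[
  \tr(PV)=\epsilon_2\bigl(\what\bb^*P\what\ba+\what\ba^*P\what\bb\bigr)=\epsilon_2(\overline\gamma+\gamma)=2\epsilon_2\Rea(\gamma).
  \]
  \item Since $[\what\ba,\what\bb]$ has orthonormal columns, $\what P$ is a compression of $P$, so $0\preceq\what P$. The matrix $I_2-\what P$ is the same compression of $I-P\succeq 0$, so $I_2-\what P\succeq0$ as well.
  \item Hence $0\le\alpha,\beta\le1$, which gives $0\le s=\alpha+\beta\le2$.
  \item Nonnegativity of the $2\times2$ determinants of $\what P$ and of $I_2-\what P$ gives $|\gamma|^2\le\alpha\beta$ and $|\gamma|^2\le(1-\alpha)(1-\beta)$.
\end{enumerate}

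For part (b), the first claim is immediate: $|\gamma|=|\ba^*P_{12}\bb|\le\|P_{12}\|_2$, because $\ba$ and $\bb$ are unit vectors.

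For the identity in \eqref{eq:alg}, expand the squared Frobenius norm.
\begin{enumerate}[(i)]
  \item Since $\ba$ and $\bb$ are unit vectors, $\|\ba\wtd\bb^*\|_{\F}^2=\|\wtd\bb\|_2^2$ and $\|\wtd\ba\bb^*\|_{\F}^2=\|\wtd\ba\|_2^2$.
  \item The cross term is $2\Rea\tr\bigl(\wtd\bb\ba^*\wtd\ba\bb^*\bigr)=2\Rea\bigl[(\ba^*\wtd\ba)(\bb^*\wtd\bb)\bigr]$.
  \item Here $\ba^*\wtd\ba=\ba^*P_{11}\ba=\alpha$ and $\bb^*\wtd\bb=\beta$, both real. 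So the cross term equals $2\alpha\beta$, which yields the equality.
\end{enumerate}

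The inequality in \eqref{eq:alg} is the only step needing an idea, and I expect it to be the main (mild) obstacle.
\begin{enumerate}[(i)]
  \item Read off the diagonal blocks of $P^2=P$: $P_{11}=P_{11}^2+P_{12}P_{12}^*$ and $P_{22}=P_{12}^*P_{12}+P_{22}^2$.
  \item Using $P_{11}^*=P_{11}$, this gives
  \[
  \|\wtd\ba\|_2^2=\ba^*P_{11}^2\ba=\alpha-\|P_{12}^*\ba\|_2^2,\qquad \|\wtd\bb\|_2^2=\beta-\|P_{12}\bb\|_2^2.
  \]
  \item Substituting, it remains to show $\|P_{12}^*\ba\|_2^2+\|P_{12}\bb\|_2^2\ge2|\gamma|^2$.
  \item This follows from Cauchy--Schwarz in two ways: $|\gamma|=|(P_{12}^*\ba)^*\bb|\le\|P_{12}^*\ba\|_2$ and $|\gamma|=|\ba^*(P_{12}\bb)|\le\|P_{12}\bb\|_2$, since $\ba$ and $\bb$ are unit vectors.
\end{enumerate}
Adding the squares of the two bounds in (iv) completes \eqref{eq:alg}.
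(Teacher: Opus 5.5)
Your proposal is correct and follows essentially the same route as the paper: the trace identity from $V=\epsilon_2(\what\ba\what\bb^*+\what\bb\what\ba^*)$, the compression bounds $0\preceq\what P\preceq I_2$ giving the determinant inequalities, direct Frobenius expansion for the equality in \eqref{eq:alg}, and the diagonal blocks of $P^2=P$ plus Cauchy--Schwarz for the inequality. The only difference is that you write out the $\wtd\bb$ half explicitly via $|\gamma|\le\|P_{12}\bb\|_2$, where the paper just says ``likewise.''
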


\begin{proof}
Note that
$\tr(PV)=\epsilon_2({\what\bb}^*P{\what\ba}+{\what\ba}^*P{\what\bb})=2\epsilon_2\Rea({\what\ba}^*P{\what\bb})$.
Note that $\what P$ in \eqref{eq:B} is the $2\times2$ compression of $P$ in the orthonormal
frame $\{\what\ba,\what\bb\}$.
It is therefore positive semidefinite with eigenvalues in $[0,1]$. This gives  $0\le s=\tr(\what P)\le2$,
$\det(\what P)=\alpha\beta-|\gamma|^2\ge 0$ and
$$
\det(I_2-\what P)=(1-\alpha)(1-\beta)-|\gamma|^2\ge 0
\quad\Rightarrow\quad
|\gamma|^2\le(1-\alpha)(1-\beta),
$$
as was to be shown for item (a).

For item (b), we have
$|\gamma|=|\ba^*P_{12}\bb|\le\|\ba^*P_{12}\|_2\le\|P_{12}\|_2$.
We will need $|\gamma|\le\|\ba^*P_{12}\|_2=\|P_{12}^*\ba\|_2$ later in \eqref{eq:tr-pr-1}.
Let $X:=\ba{\wtd\bb}^*-{\wtd\ba}\bb^*$. Expanding
$\|X\|_{\F}^2=\tr(X^*X)$ and using
$\ba^*{\wtd\ba}=\alpha$, $\bb^*{\wtd\bb}=\beta$ (both are real) lead to the equality in~\eqref{eq:alg}.
It remains to show the inequality there.
It follows from $P^2=P$ that $P_{11}-P_{11}^2=P_{12}P_{12}^*$, and hence
\begin{equation}\label{eq:tr-pr-1}
\alpha-\|{\wtd\ba}\|_2^2
 =\ba^*(P_{11}-P_{11}^2)\ba
 =\|P_{12}^*\ba\|_2^2
 \ge|\gamma|^2
\quad\Rightarrow\quad
\|{\wtd\ba}\|_2^2\le \alpha-|\gamma|^2.
\end{equation}
Likewise $\|{\wtd\bb}\|_2^2\le\beta-|\gamma|^2$, giving the inequality in~\eqref{eq:alg}.
\end{proof}
The next lemma, due to Davis-Kahan~\cite{daka:1970},  is well-known (see, also, \cite[Chapter~V, Theorem~3.1, pp.~247--248]{stsu:1990}).

\begin{lemma}[Davis-Kahan~\cite{daka:1970}]  
\label{lem:gap}
For any
$X\in\bbC^{m\times n}$,
$$
\|H_1X-XH_2\|_{\F}\ge\eta\|X\|_{\F},
$$ 
where $\eta=\gap\bigl(\eig(H_1),\eig(H_2)\bigr)$ as in \eqref{eq:gaps-2}
\end{lemma}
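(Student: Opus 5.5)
This is the classical Davis--Kahan $\sin\theta$-type separation bound, and I would prove it by diagonalizing both Hermitian blocks.

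First I would take eigendecompositions $H_1=Q_1\Lambda_1Q_1^*$ and $H_2=Q_2\Lambda_2Q_2^*$ with $\Lambda_1=\diag(\alpha_1,\ldots,\alpha_m)$ and $\Lambda_2=\diag(\beta_1,\ldots,\beta_n)$, as in the proof of \Cref{lem:var}. Setting $Y:=Q_1^*XQ_2$, the Frobenius norm is invariant under multiplication by unitary matrices on either side. Hence
\[
\|H_1X-XH_2\|_{\F}=\|Q_1(\Lambda_1Y-Y\Lambda_2)Q_2^*\|_{\F}=\|\Lambda_1Y-Y\Lambda_2\|_{\F},
\qquad \|X\|_{\F}=\|Y\|_{\F}.
\]
This reduces the claim to the case where both blocks are diagonal.

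Next I would compute the diagonal case entrywise. The $(i,j)$ entry of $\Lambda_1Y-Y\Lambda_2$ is $(\alpha_i-\beta_j)y_{ij}$, so
\[
\|\Lambda_1Y-Y\Lambda_2\|_{\F}^2=\sum_{i,j}|\alpha_i-\beta_j|^2|y_{ij}|^2\ge\eta^2\sum_{i,j}|y_{ij}|^2=\eta^2\|Y\|_{\F}^2,
\]
where the inequality uses $|\alpha_i-\beta_j|\ge\eta$ from the definition \eqref{eq:gaps-2}. Taking square roots gives the lemma.

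I do not expect a real obstacle. The one point to handle correctly is dimensions: $X$ must be $m\times n$ so that $H_1X$ and $XH_2$ are defined, and the statement indeed takes $X\in\bbC^{m\times n}$. If the intended application instead involves $n\times m$ matrices built from $E$, the same argument applies verbatim to $H_2X-XH_1$ by symmetry of the gap.
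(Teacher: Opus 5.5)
Your proof is correct: the unitary reduction to $\Lambda_1Y-Y\Lambda_2$ followed by the entrywise bound $|\alpha_i-\beta_j|\ge\eta$ is the standard argument. The paper gives no proof of its own and cites Davis--Kahan and Stewart--Sun, where the Frobenius-norm case is handled by this same diagonalization, so your approach matches it.
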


\begin{lemma}  
\label{lem:comm}
For any orthogonal projector $P$ on $\bbC^N$, partitioned as in \eqref{eq:P-proj}, that commutes with $H(t)$ in \eqref{eq:H(t)}, i.e., $H(t)P=PH(t)$,
we have
\begin{align}
H_1P_{12}-P_{12}H_2
  &=-t\epsilon_2\bigl(\ba{\wtd\bb}^*-{\wtd\ba}\bb^*\bigr), \label{eq:comm} \\
\bigl|\tr(PV)\bigr|
  &\le c(t)\,\epsilon_2\sqrt{\alpha+\beta}, \label{eq:plane}
\end{align}
where $c(t)$ is as in \eqref{eq:c}.
\end{lemma}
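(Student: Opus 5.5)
First I would derive the commutation identity \eqref{eq:comm}. I write $H(t)=\wtd A+tV$ with $V=\epsilon_2(\what\ba\what\bb^*+\what\bb\what\ba^*)$, so that $V$ has $(2,1)$ block $\epsilon_2\bb\ba^*$ and $(1,2)$ block $\epsilon_2\ba\bb^*$. Comparing the $(1,2)$ blocks of $H(t)P$ and $PH(t)$ then gives
\[
H_1P_{12}+t\epsilon_2\ba\bb^*P_{22}=P_{11}t\epsilon_2\ba\bb^*+P_{12}H_2 .
\]
Using $\bb^*P_{22}=(P_{22}\bb)^*=\wtd\bb^*$, since $P_{22}$ is Hermitian, together with $P_{11}\ba=\wtd\ba$, this rearranges directly to \eqref{eq:comm}.

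For \eqref{eq:plane}, the case $\eta=0$ is handled by Cauchy--Schwarz-type bounds. By \Cref{lem:tr}(a), $|\tr(PV)|\le2\epsilon_2|\gamma|$. Since $|\gamma|^2\le\alpha\beta\le(\alpha+\beta)^2/4$ and also $|\gamma|^2\le\alpha\beta$, I get $4|\gamma|^2\le 4\alpha\beta\le(\alpha+\beta)^2$. I need $4|\gamma|^2\le \alpha+\beta$. This holds because $(\alpha+\beta)^2\le \alpha+\beta$ when $s\le1$, while for $s>1$ one uses $|\gamma|^2\le(1-\alpha)(1-\beta)$ combined with $|\gamma|^2\le\alpha\beta$: the product bound gives $|\gamma|^4\le \alpha(1-\alpha)\beta(1-\beta)\le 1/16$, hence $|\gamma|\le 1/2\le\sqrt{s}/2$. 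So in all cases $4|\gamma|^2\le s$, which is the claimed bound with $c=1$.

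For $\eta>0$ (and $t>0$; the case $t=0$ is trivial because then $P$ commutes with $\wtd A$ and $c(0)=0$ — I will check that $\gamma=0$ there, and if not, let the general argument below cover it), the plan is to combine two inequalities on $|\gamma|$:
\begin{itemize}
\item[(i)] from \Cref{lem:gap} applied to \eqref{eq:comm} and \eqref{eq:alg},
\[
\eta^2|\gamma|^2\le\eta^2\|P_{12}\|_{\F}^2\le t^2\epsilon_2^2\bigl(s-2\alpha\beta-2|\gamma|^2\bigr);
\]
\item[(ii)] from \Cref{lem:tr}(a), $|\gamma|^2\le\alpha\beta$.
\end{itemize}
Adding twice (ii) to the right-hand side of (i) gives $(\eta^2+4t^2\epsilon_2^2)|\gamma|^2\le t^2\epsilon_2^2 s$; more precisely, $-2\alpha\beta\le-2|\gamma|^2$, so $(\eta^2+4t^2\epsilon_2^2)|\gamma|^2\le t^2\epsilon_2^2 s$. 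Hence
\[
|\tr(PV)|\le2\epsilon_2|\gamma|\le \frac{2t\epsilon_2}{\sqrt{\eta^2+4t^2\epsilon_2^2}}\,\epsilon_2\sqrt{s}=c(t)\,\epsilon_2\sqrt{\alpha+\beta}.
\]
This also settles $t=0$ automatically.

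The only delicate step is the $\eta=0$ case, where the bound $4|\gamma|^2\le s$ must be extracted from the $2\times2$ compression constraints as above. Everything else is block algebra together with \Cref{lem:gap}.
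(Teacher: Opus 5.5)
Your proposal is correct and follows the paper's proof essentially step for step. You obtain \eqref{eq:comm} by comparing $(1,2)$ blocks, split $s\le1$ versus $s>1$ via the $2\times2$ compression constraints when $\eta=0$, and for $\eta>0$ combine \Cref{lem:gap} with \eqref{eq:alg}, $|\gamma|\le\|P_{12}\|_2$ and $|\gamma|^2\le\alpha\beta$ to get $(\eta^2+4t^2\epsilon_2^2)|\gamma|^2\le t^2\epsilon_2^2 s$. The only differences are cosmetic: for $s>1$ you multiply the two determinant bounds to get $|\gamma|\le 1/2$, whereas the paper applies AM--GM to $(1-\alpha)(1-\beta)$ to get $2|\gamma|\le 2-s\le 1$, and you are right that the case $t=0$ is already covered by the general $\eta>0$ argument.
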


\begin{proof}
It follows from $H(t)=\wtd A+tV$ and $H(t)P=PH(t)$ that
\[
{
\wtd A\,P-P\wtd A=-t(VP-PV).}
\]
The $(1,2)$-block of the left-hand side is
$H_1P_{12}-P_{12}H_2$, while the $(1,2)$-block of $VP-PV$ is
\[
E^*P_{22}-P_{11}E^*=\epsilon_2(\ba\wtd\bb^*-\wtd\ba\bb^*),
\]
yielding \eqref{eq:comm}.
Next we prove \eqref{eq:plane}.
For  $\what P$ in \eqref{eq:B},
by \Cref{lem:tr},
\[
{
|\gamma|^2\le \alpha\beta,
\qquad
|\gamma|^2\le(1-\alpha)(1-\beta).}
\]
If $s=\alpha+\beta\le1$, then
$2|\gamma|\le2\sqrt{\alpha\beta}\le \alpha+\beta=s\le\sqrt{s}$\,; on the other hand, if $s\ge1$, then
$$
2|\gamma|\le2\sqrt{(1-\alpha)(1-\beta)}\le (1-\alpha)+(1-\beta)=2-s\le1\le\sqrt{s}\,.
$$
Therefore always
$$
|\tr(PV)|=|2\epsilon_2\,\Rea(\gamma)|\le2\epsilon_2|\gamma|\le\epsilon_2\sqrt{s}\,,
$$
implying \eqref{eq:plane} in the case of  {$\eta=0$ as $c(t)=1$.
Consider $\eta>0$} for which $\eta^2+4t^2\epsilon_2^2>0$.
Use \Cref{lem:gap} to get
\[
\eta\|P_{12}\|_2\le \eta\|P_{12}\|_{\F}
\le\|H_1P_{12}-P_{12}H_2\|_{\mathrm F}
\le
t\epsilon_2\sqrt{s-2\alpha\beta-2|\gamma|^2}.
\]
With $|\gamma|\le\|P_{12}\|_2$ and $\alpha\beta\ge|\gamma|^2$,
\[
\eta^2|\gamma|^2
\le
t^2\epsilon_2^2\bigl(s-4|\gamma|^2\bigr),
\]
yielding
$|\gamma|\le t\epsilon_2(\eta^2+4t^2\epsilon_2^2)^{-1/2}\sqrt{s}=\frac 12 c(t)\sqrt s$.
Now use \Cref{lem:tr} to see
$$
|\tr(PV)|=2\epsilon_2|\Rea(\gamma)|\le2\epsilon_2|\gamma|\le c(t)\epsilon_2\sqrt{s},
$$
as was to be shown.
\end{proof}

\begin{lemma}  
\label{lem:os}
Let $z_j(t)$ be as in \Cref{lem:derivative} and $c(\cdot)$ in \eqref{eq:c}.
For almost every $t\in[0,1]$,
\begin{equation}
\label{eq:os}
\sum_{j=1}^N\bigl(z_j(t)\bigr)_+
\le
c(t)\,\epsilon_2,
\qquad
\sum_{j=1}^N\bigl(z_j(t)\bigr)_-
\le
c(t)\,\epsilon_2.
\end{equation}
\end{lemma}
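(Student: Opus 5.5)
Fix $t$ at which all $\lambda_j(\cdot)$ are differentiable, and let $\{\bx_j(t)\}$ and $z_j(t)=\bx_j(t)^*V\bx_j(t)$ be as in \Cref{lem:derivative}. The plan is to collect the eigenvectors with positive $z_j$ into a single orthogonal projector that commutes with $H(t)$, and then apply \eqref{eq:plane} of \Cref{lem:comm} to that projector.

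Let $J_+=\{j: z_j(t)>0\}$ and
$$
P_+:=\sum_{j\in J_+}\bx_j(t)\bx_j(t)^*.
$$
Each $\bx_j(t)$ is an eigenvector of $H(t)$, and the vectors are orthonormal. Hence $P_+$ is an orthogonal projector, and $H(t)P_+=\sum_{j\in J_+}\lambda_j(t)\bx_j\bx_j^*=P_+H(t)$. We do not need $P_+$ to be a spectral projector onto whole eigenspaces; commuting suffices for \Cref{lem:comm}. By construction,
$$
\tr(P_+V)=\sum_{j\in J_+}\bx_j^*V\bx_j=\sum_{j=1}^N\bigl(z_j(t)\bigr)_+ .
$$
\Cref{lem:comm} then gives
$$
\sum_j (z_j)_+\le c(t)\,\epsilon_2\sqrt{\alpha+\beta},
$$
where $\alpha=\what\ba^*P_+\what\ba$ and $\beta=\what\bb^*P_+\what\bb$.

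The remaining issue is the factor $\sqrt{s}$ with $s=\alpha+\beta\in[0,2]$, which may exceed $1$. I would remove it with a complementary projector. Set $J_-=\{j:z_j<0\}$ and define $P_-$ the same way. Because $\tr V=0$, we have $\sum_j z_j=\tr(V)=0$, so $\sum_j(z_j)_+=\sum_j(z_j)_-$. Also $P_++P_-\le I$, which gives
$$
s_++s_-=\what\ba^*(P_++P_-)\what\ba+\what\bb^*(P_++P_-)\what\bb\le 2.
$$
So one of $s_+,s_-$ is at most $1$. Applying \Cref{lem:comm} to that projector (for $P_-$ we have $|\tr(P_-V)|=\sum_j(z_j)_-$) shows that the common value $\sum_j(z_j)_+=\sum_j(z_j)_-$ is at most $c(t)\epsilon_2\sqrt{\min(s_+,s_-)}\le c(t)\epsilon_2$. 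This proves both inequalities in \eqref{eq:os} at once.

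The step I would double-check most carefully is the trace-zero balancing combined with $s_++s_-\le2$, since it is the one that turns $\sqrt s$ into $1$. Both facts are elementary: $V$ has zero diagonal blocks, and $\what\ba,\what\bb$ are unit vectors. One more point to confirm is that \Cref{lem:comm} indeed applies to an arbitrary commuting orthogonal projector, including $P=0$ in the trivial case; as stated, it does.
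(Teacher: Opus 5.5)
Your proof is correct and follows the paper's argument. The only difference is that the paper applies \eqref{eq:plane} to $P_+$ and to the complement $I-P_+$, whose $s$-value is exactly $2-s_+$, and then uses $\tr(V)=0$; your use of $P_-$ with $s_++s_-\le 2$ is the same argument, differing only in where the eigenvectors with $z_j(t)=0$ are placed.
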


\begin{proof}
Let $P_+$ be the orthogonal projector onto
$\operatorname{span}\{\bx_j(t)\,:\,z_j(t)>0\}$
{(those $\bx_j(t)$ with $z_j(t)=0$ may be included in or excluded from the range
of $P_+$ arbitrarily)}. Then $P_+$ commutes with $H(t)$, and
we get
\begin{equation}\label{eq:os-pf-1}
\sum_{j=1}^N(z_j(t))_+=\sum_{j\,:\, z_j(t)>0} z_j(t)=\tr(P_+VP_+)=\tr(P_+^2V)=\tr(P_+V)
  \le c(t)\epsilon_2\sqrt{s_+}
\end{equation}
by \eqref{eq:plane}, where $s_+:=\alpha_++\beta_+=\tr(\what P)$ is the corresponding one defined in \eqref{eq:P-proj} with $P=P_+$ here.
The complementary projector $I-P_+$ commutes with $H(t)$, too, and with $P=I-P_+$ in \eqref{eq:P-proj},
$\tr(\what P)=2-s_+$ for the corresponding $\what P$ to $P=I-P_+$ 
because $[{\what\ba},{\what\bb}]$ is orthonormal. Now use \eqref{eq:plane} with $P=I-P_+$ to get
$$
|\tr((I-P_+)V)|\le c(t)\epsilon_2\sqrt{2-s_+},
$$
and thus
\begin{equation}\label{eq:os-pf-2}
\sum_j(z_j(t))_+=\tr(P_+V)=-\tr((I-P_+)V)\le c(t)\epsilon_2\sqrt{2-s_+},
\end{equation}
where the second equality is because $\tr(V)=0$.
Combine \eqref{eq:os-pf-1} and \eqref{eq:os-pf-2} to get
\[
\sum_{j=1}^N\bigl(z_j(t)\bigr)_+
\le
c(t)\epsilon_2\sqrt{\min(s_+,2-s_+)}
\le
c(t)\epsilon_2,
\]
which is the first inequality in \eqref{eq:os}.
Finally, since $\{\bx_j(t)\}_{j=1}^N$ is an orthonormal basis and
$\tr(V)=0$,
\[
\sum_{j=1}^N z_j(t)=\tr(V)=0,
\quad\Rightarrow\quad
\sum_{j=1}^N(z_j(t))_-=\sum_{j=1}^N(z_j(t))_+,
\]
implying that the second inequality in \eqref{eq:os} is a corollary of the first one there.
\end{proof}

\begin{proof}[Proof of Theorem~\ref{thm:main-rank1E}]
Let
$\Delta\blambda\equiv [\Delta\lambda_j]:=\blambda(A)-\blambda(\wtd A)$.
\Cref{lem:derivative,lem:length,lem:os} with $\bgamma(t)=\blambda(t)$ give
\begin{align*}
\sum_{j=1}^N(\Delta\lambda_j)_+
&\le
\int_0^1\sum_{j=1}^N\Bigl(\frac{\rmd}{\rmd\!t}\lambda_j(t)\Bigr)_+\,\rmd\! t
=
\int_0^1\sum_{j=1}^N\bigl(z_j(t)\bigr)_+\,\rmd\! t
\le
\epsilon_2\int_0^1 c(t)\,\rmd\! t
=
{\phi(\eta,\epsilon_2)\,\epsilon_2.}
\end{align*}
Since $(\Delta\blambda)_+\ge0$, this implies
\[
{
(\Delta\blambda)_+
\prec_w \phi(\eta,\epsilon_2)[\epsilon_2,0,\ldots,0]^{\T}.}
\]
The same argument yields the analogous relation for $(\Delta\blambda)_-$.
Hence
\begin{align}
\max_j\lvert\Delta\lambda_j\rvert
   &=\max\Bigl\{\max_j(\Delta\lambda_j)_+,\ \max_j(\Delta\lambda_j)_-\Bigr\}  \nonumber\\
   &\le\max\Bigl\{\sum_j(\Delta\lambda_j)_+,\ \sum_j(\Delta\lambda_j)_-\Bigr\} \nonumber\\
   &\le {\phi(\eta,\epsilon_2)\,\epsilon_2},\label{eq:main-rank1E:pf-1}\\
\sum_{j=1}^N\lvert\Delta\lambda_j\rvert
   &=\sum_j(\Delta\lambda_j)_++\sum_j(\Delta\lambda_j)_-  \nonumber\\
   &\le{2\phi(\eta,\epsilon_2)\,\epsilon_2.} \label{eq:main-rank1E:pf-2}
\end{align} 
Denote by $|\Delta\blambda|$ the vector from taking entrywise absolute value on $\Delta\blambda$,
and by $|\Delta\blambda|^\downarrow$ the vector obtained by rearranging the components of $|\Delta\blambda|$ in  decreasing order.
Inequalities in \eqref{eq:main-rank1E:pf-1} and \eqref{eq:main-rank1E:pf-2}, together with the definition of weak majorization, imply
$$
\sum_{j=1}^k
|\Delta\blambda|_j^\downarrow
\le
\begin{cases}
\phi(\eta,\epsilon_2) \epsilon_2,&k=1,\\
2\phi(\eta,\epsilon_2) \epsilon_2,&2\le k\le N,
\end{cases}
$$
and hence
\[
|\Delta\blambda|
\prec_w
\phi(\eta,\epsilon_2)
[\epsilon_2,\epsilon_2,0,\ldots,0]^{\T},
\]
which is \eqref{eq:maj}.
\end{proof}

The $2\times 2$ example of \Cref{ex:sharp} has $\rank(E)=1$ and attains equality in \eqref{eq:main}
for every unitarily invariant norm.

\section{Application to Singular Value Problem}\label{sec:svd}
Our main result in \cref{sec:QNorm} for the $Q$-norm is straightforwardly applicable to the
singular value problem. As in \cite[section~3]{lili:2005}, we define the vector of  singular
values of  matrix $X\in\bbC^{p\times q}$ by
$$
\bsigma(X) = [\sigma_1(X), \dots, \sigma_k(X)]^{\T},
$$
where $k = \max\{p,q\}$ and
$\sigma_1(X) \ge \cdots \ge \sigma_k(X)$ are the
nonnegative square roots of the eigenvalues of
the matrix $XX^*$ or $X^*X$  depending on which
one has a larger size. Denote by
$$
\sv(X)=\{\sigma_1(X), \dots, \sigma_k(X)\},
$$
the multiset of the singular values of $X$.
Note that the nonzero eigenvalues of
$XX^*$ and $X^*X$ are the same, and they give rise to the
nonzero singular values of $X$ which are of utmost importance when it comes to unitarily invariant norms.

The following theorem is a corollary of \Cref{thm:main-Q}, upon using the set up in the proof of \cite[Theorem~3]{lili:2005}.
In the theorem, the $Q$-norm, as a special kind of unitarily invariant norm, is supposed to be generic to matrix sizes.
One way to do this is to have a unitarily invariant norm on matrices of sufficiently large size first and then
the unitarily invariant norm on matrices of any smaller size is understood by appending to those smaller sized matrices with zero rows/columns make up their sizes. It is noted that most commonly used unitarily invariant norms are generic to matrix sizes, such as
all Schatten $p$-norms (particularly the Frobenius norm and the spectral norm included).

\begin{theorem}\label{thm:main-Q-sv}
Consider
\begin{equation}\label{eq:BtB-svd}
B = \kbordermatrix{  & \sss k & \sss \ell \cr
                  \sss m & G_1 & E_1  \cr
                  \sss n & E_2  & G_2 \cr }, \quad
\wtd B = \kbordermatrix{  & \sss k & \sss \ell \cr
                  \sss m & G_1 & 0  \cr
                  \sss n & 0  & G_2 \cr }\in\bbC^{(m+n)\times (k+\ell)},
\end{equation}
and let $\epsilon_2=\max\{\|E_1\|_2,\|E_2\|_2\}$, and
\begin{equation}\nonumber
  \eta=\min_{\mu_1\in\sv(G_1),\,\mu_2\in\sv(G_2)}|\mu_1-\mu_2|.
\end{equation}
Then for any $Q$-norm $\|\cdot\|_{Q}$, we have
\begin{equation}\label{eq:main-Q-sv}
\|\diag\big(\bsigma(B)-\bsigma(\wtd B)\big)\|_{Q}
\le\phi(\eta,\epsilon_2)\left\|\begin{bmatrix}
            0  & E_1 \\
             E_2^* & 0   \end{bmatrix}\right\|_{Q},
\end{equation}
where $\phi(\cdot,\cdot)$ is as defined in \eqref{eq:phi}.
In particular, letting $\|\cdot\|_{Q}=\|\cdot\|_{\F}$ yields
\begin{equation}\nonumber
\|\bsigma(B)-\bsigma(\wtd B)\|_2\le\phi(\eta,\epsilon_2)\Big[\|E_1\|_{\F}^2+\|E_2\|_{\F}^2\Big]^{1/2}.
\end{equation}
\end{theorem}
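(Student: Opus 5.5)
We reduce the statement to \Cref{thm:main-Q} via the Jordan--Wielandt embedding, as in the proof of \cite[Theorem~3]{lili:2005}. Put $M:=m+n$ and $K:=k+\ell$. For a matrix $X$, the Hermitian matrix $\begin{bmatrix}0&X\\X^*&0\end{bmatrix}$ has eigenvalues $\pm\sigma_i(X)$ together with extra zeros. The point is to embed $B$ this way and then permute rows and columns so that the result has the block form \eqref{eq:A} with respect to the splitting ``$(m,k)$-part versus $(n,\ell)$-part''. Concretely, set
\[
A:=\begin{bmatrix} H_1 & \mathcal E^*\\ \mathcal E & H_2\end{bmatrix},\qquad
H_1=\begin{bmatrix}0&G_1\\G_1^*&0\end{bmatrix},\quad
H_2=\begin{bmatrix}0&G_2\\G_2^*&0\end{bmatrix},\quad
\mathcal E=\begin{bmatrix}0&E_2\\E_1^*&0\end{bmatrix}.
\]
One checks that $A$ is permutation-similar to the Jordan--Wielandt matrix of $B$, so $\eig(A)=\{\pm\sigma_i(B)\}\cup\{0,\ldots\}$. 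Dropping $\mathcal E$ gives $\wtd A$, which is permutation-similar to the Jordan--Wielandt matrix of $\wtd B$.

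Next we identify the quantities entering \Cref{thm:main-Q}. First, $\|\mathcal E\|_2=\max\{\|E_1\|_2,\|E_2\|_2\}=\epsilon_2$. Second, $V=A-\wtd A$ has the same nonzero singular values as $\mathcal E$, each repeated twice; these are the singular values of $E_1$ and of $E_2$, i.e. those of $\begin{bmatrix}0&E_1\\E_2^*&0\end{bmatrix}$, each doubled. Third, the gap $\gap(\eig(H_1),\eig(H_2))$ between the sets $\{\pm\sigma(G_1)\}\cup\{0\}$ and $\{\pm\sigma(G_2)\}\cup\{0\}$ must be related to $\eta$. If zeros appear on both sides, this gap is $0$, and it is not automatically $\ge\eta$. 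Moreover, $\phi(\cdot,\epsilon)$ is decreasing in its first argument, so a smaller gap only weakens the conclusion. I therefore expect this to be the main obstacle. I would follow \cite{lili:2005}: interpret $\sv(\cdot)$ as including the padded zero singular values, which by the convention $k=\max\{p,q\}$ covers the extra zero eigenvalues of the Jordan--Wielandt blocks. Then the negative parts satisfy $|(-\mu_1)-(-\mu_2)|=|\mu_1-\mu_2|$ and the cross terms satisfy $|\mu_1+\mu_2|\ge|\mu_1-\mu_2|$ for $\mu_i\ge0$, so the gap equals the $\eta$ of the theorem. In the worst case, one can use \eqref{eq:lili2005a}-type individual gaps, restricting attention to the nonnegative half of the spectrum.

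Finally, apply \Cref{thm:main-Q} to $A,\wtd A$. The ordered eigenvalue vectors are $[\bsigma(B);0;\ldots;-\bsigma(B)^{\uparrow}]$ and the analogous vector for $\wtd B$. Hence $|\blambda(A)-\blambda(\wtd A)|$ consists of two copies of $|\bsigma(B)-\bsigma(\wtd B)|$ plus zeros, and $\bsigma(V)$ consists of two copies of $\bsigma\!\left(\begin{bmatrix}0&E_1\\E_2^*&0\end{bmatrix}\right)$ plus zeros. To remove the doubling, use the $Q$-structure $\Phi_Q(\bx)=[\Phi'(\bx\circ\bx)]^{1/2}$: doubling a vector corresponds to the direct sum $Y\oplus Y$, and for the doubled vectors we get the inequality on both sides in the same form. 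To descend from it, I would instead run the proof of \Cref{thm:main-Q} directly. Alternatively, note that the eigen-derivatives come in $\pm$ pairs, and apply the Schur--Horn majorization step of \Cref{lem:l2} to half of the eigenvectors. That half of the eigenvectors spans the graph-space on which $V^2$ acts as $\diag(\mathcal E^*\mathcal E\text{-type})$ with eigenvalues $\sigma_i^2$ of the $E$-block, each appearing once. This gives \eqref{eq:main-Q-sv}, using the size-generic convention for $\|\cdot\|_Q$. The Frobenius specialization then follows because $\left\|\begin{bmatrix}0&E_1\\E_2^*&0\end{bmatrix}\right\|_{\F}^2=\|E_1\|_{\F}^2+\|E_2\|_{\F}^2$.
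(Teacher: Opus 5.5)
\textbf{There is a genuine gap in the final step, the removal of the doubling.} Your setup matches the paper's, which just invokes the Jordan--Wielandt set-up of Li--Li's Theorem~3 and then applies \Cref{thm:main-Q}. The following are all correct:
\begin{itemize}
\item the permuted blocks $H_1,H_2,\mathcal E$;
\item $\|\mathcal E\|_2=\epsilon_2$;
\item $\bsigma(V)=\bsigma(Y)\oplus\bsigma(Y)$ up to zeros, with $Y=\begin{bmatrix}0&E_1\\E_2^*&0\end{bmatrix}$;
\item under the padded convention for $\sv(\cdot)$, $\gap(\eig(H_1),\eig(H_2))=\eta$ exactly, so your fallback to individual gaps is unnecessary.
\end{itemize}

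Write $\bx=|\bsigma(B)-\bsigma(\wtd B)|$ and $\by=\bsigma(Y)$. What \Cref{thm:main-Q} gives is only the doubled inequality $\Phi_Q(\bx\oplus\bx)\le\phi(\eta,\epsilon_2)\,\Phi_Q(\by\oplus\by)$. For a single fixed $Q$-norm this does not imply the undoubled one. Take $\Phi_Q(\bx)=(x_{[1]}^2+x_{[2]}^2)^{1/2}$, where $x_{[i]}$ is the $i$th largest entry. The doubled inequality then only says $x_{[1]}\le\phi\,y_{[1]}$.

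Your proposed repair, Schur--Horn applied to the positive half of the eigenvectors, rests on a claim that is false as stated. In Jordan--Wielandt coordinates $V^2=\diag(FF^*,F^*F)$ with $F=\begin{bmatrix}0&E_1\\E_2&0\end{bmatrix}$. The span of the vectors $[\bu_j;\bv_j]/\sqrt2$ is not $V^2$-invariant in general. Schur--Horn on a partial orthonormal set only gives weak majorization by the full, doubled eigenvalue list of $V^2$, which loses exactly what you need.

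The missing idea is to use \Cref{thm:main-Q} for all $Q$-norms at once.
\begin{enumerate}
\item By \Cref{lm:Qnorm-gen}, every unitarily invariant norm $\|\cdot\|_{\UI}$ produces the $Q$-norm $\|Z^*Z\|_{\UI}^{1/2}$.
\item So the doubled inequality holds for the squared vectors under every symmetric gauge function.
\item Fan's dominance theorem then gives $(\bx\circ\bx)\oplus(\bx\circ\bx)\prec_w\phi^2\,(\by\circ\by)\oplus(\by\circ\by)$.
\item Comparing sums of the $2t$ largest entries gives $2\sum_{i\le t}x_{[i]}^2\le2\phi^2\sum_{i\le t}y_{[i]}^2$, that is, $\bx\circ\bx\prec_w\phi^2\,\by\circ\by$.
\item Applying $\Phi_{\UI'}$ for the given $Q$-norm yields \eqref{eq:main-Q-sv}.
\end{enumerate}
Equivalently, apply the doubled inequality to the $Q$-norms $(\sum_{i\le 2t}\sigma_i^2)^{1/2}$.

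Your eigenvector route can be made rigorous, but it takes considerably more work than this short majorization argument. You would need to:
\begin{itemize}
\item split $\bx_j^*V^2\bx_j=\tfrac12(\bu_j^*FF^*\bu_j+\bv_j^*F^*F\bv_j)$;
\item prove that the $\bu_j$'s and the $\bv_j$'s are separately orthonormal, using the reflection $[\bu;\bv]\mapsto[\bu;-\bv]$;
\item apply Schur--Horn to each family against a single copy of $\bsigma(F)^2$, then average;
\item treat indices with $\sigma_j(t)=0$ separately.
\end{itemize}
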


\begin{proof} 
With the set up in the proof of \cite[Theorem~3]{lili:2005},
a straightforward application of \Cref{thm:main-Q} gives
\begin{equation}\label{eq:main-Q-sv:pf-1}
\left\|\begin{bmatrix}
         \diag(\Delta\bsigma) & 0 \\
         0 & \diag(\Delta\bsigma)
       \end{bmatrix}\right\|_{Q}
\le\phi(\eta,\epsilon_2)\left\|\begin{array}{cc|cc}
           0     & 0 & 0     & E_1 \\
           0 & 0   & E_2^* & 0   \\ \hline
           0     & E_2 & 0     & 0 \\
           E_1^*   & 0   & 0 & O \end{array}
\right\|_{Q},
\end{equation}
for any $Q$-norm $\|\cdot\|_{Q}$, where $\Delta\bsigma :=\bsigma(B)-\bsigma(\wtd B)$. Write
$$
X=\diag(\Delta\bsigma), \quad Y=\begin{bmatrix}
            0  & E_1 \\
             E_2^* & 0   \end{bmatrix}.
$$
By \Cref{lm:Qnorm-gen}, inequality \eqref{eq:main-Q-sv:pf-1}
implies
$\|\diag(X^*X,X^*X)\|_{\UI}^{1/2}\le\phi(\eta,\epsilon_2)\|\diag({YY^*, Y^*Y})\|_{\UI}^{1/2}$
for any unitarily invariant norm $\|\cdot\|_{\UI}$, or equivalently,
\begin{equation}\label{eq:main-Q-sv:pf-2}
\|\diag(X^*X,X^*X)\|_{\UI}\le[\phi(\eta,\epsilon_2)]^2\|\diag({YY^*, Y^*Y})\|_{\UI}
\end{equation}
for any unitarily invariant norm $\|\cdot\|_{\UI}$. By Fan's dominance theorem~\cite[Theorem~IV.2.2]{bhat:1996},
inequality \eqref{eq:main-Q-sv:pf-2} implies
$$
\sum_{i=1}^{t}[\sigma_i(X)]^2\le [\phi(\eta,\epsilon_2)]^2\sum_{i=1}^{t}[\sigma_i(Y)]^2
\quad\mbox{for $t=1,\ldots,\max\{m+n,k+\ell\}$},
$$
which implies $\|X^*X\|_{\UI}\le[\phi(\eta,\epsilon_2)]^2\|Y^*Y\|_{\UI}$. This is
\eqref{eq:main-Q-sv} for any $Q$-norm.
\end{proof}

The classical Mirsky's SVD perturbation theorem, applied to $B$ and $\wtd B$ in \eqref{eq:BtB-svd}, gives for any unitarily invariant norm
\cite[p.~204]{stsu:1990}
\begin{equation}\label{eq:Mirsky-svd}
\|\diag\big(\Delta\bsigma\big)\|_{\UI}\le\left\|\begin{bmatrix}
                                                                           0 & E_1 \\
                                                                           E_2 & 0
                                                                         \end{bmatrix}\right\|_{\UI}.
\end{equation}
Inequality \eqref{eq:main-Q-sv} improves \eqref{eq:Mirsky-svd} for the $Q$-norms because $\phi(\eta,\epsilon_2)\in[0,1]$ and can be
much smaller than $1$ for modest $\eta>0$ and for $E_i$ for $i=1,2$ of very small magnitude.

\section{Conclusion}
\label{sec:concl}
Li and Li~\cite{lili:2005} established an all-occasion perturbation bound \eqref{eq:lili2005b}:
$$
\|\blambda(A)-\blambda(\wtd A)\|_{\infty}
       \equiv\max_{1\le i\le N}|\lambda_i(A)-\lambda_i(\wtd A)|
       \le \dfrac{2\epsilon_2}{\eta+\sqrt{\eta^2+4\epsilon_2^2}}\cdot\epsilon_2
       \equiv \phi(\eta,\epsilon_2)\cdot\epsilon_2
$$
for block
Hermitian matrices
$$
A=\begin{bmatrix}
   H_1 & E^* \\
   E & H_2
   \end{bmatrix},
\quad
\wtd A
=
\begin{bmatrix}
H_1 & 0 \\
0 & H_2
\end{bmatrix},
$$
where $\epsilon_2=\|E\|_2=\|A-\wtd A\|_2$.
The inequality  merges two classical results:  Weyl-Lidskii theorem \eqref{bd:classical-1} and
quadratic residual bound \eqref{bd:classical-2}, elegantly into a sharper one.
In this paper, we attempt to extend the Li-Li perturbation bound to
\begin{equation}\label{eq:lili-hope}
\big\|\diag\big(\blambda(A)-\blambda(\wtd A)\big)\big\|_{\UI}
\le \phi(\eta,\epsilon_2)\,\|A-\wtd A\|_{\UI}
\end{equation}
for unitarily invariant norms $\|\cdot\|_{\UI}$. Partial successes are achieved, namely, it is shown that this new perceived inequality
\eqref{eq:lili-hope} holds
in two cases: (a) the $Q$-norms $\|\cdot\|_{Q}$,  a subclass of unitarily invariant norms that encompasses
the Schatten $p$-norm for $2\le p\le\infty$ (particularly, the Frobenius norm and the spectral norm included),
and (b) for any unitarily invariant norm in the case $\rank(E)\le 1$ which includes the situation when one
of $H_1$ and $H_2$ is
1-by-1, i.e., a scalar. An application to the singular value problem is also made.

Whether the new perceived inequality \eqref{eq:lili-hope} holds for every unitarily invariant norm without any restriction on $\rank(E)$
remains open.

\section*{AI statement}
The authors used assistance from AI tools to develop ideas presented in this paper. The
authors assume responsibility for all content.

{\small
\bibliographystyle{plain}
\bibliography{refs.bib}

@String{j-BIT                   = "BIT"}

@String{j-CMP                   = "Commun. Math. Phys."}

@String{j-DMJ                   = "Duke Math. J."}

@String{j-JAuMS-A               = "J. Austral. Math. Soc. Ser. A"}

@String{j-LAA                   = "Linear Algebra Appl."}

@String{j-SIMAX                 = "SIAM J. Matrix Anal. Appl."}

@String{j-SINUM                 = "SIAM J. Numer. Anal."}

@String{j-SIREV                 = "SIAM Rev."}

@ARTICLE{bhat:1987a,
   author = "R. Bhatia",
   year = "1987",
   title = "Some inequalities for norm ideals",
   journal = j-CMP,
   volume = "111",
   pages = "33-39",
}

@article{bhda:1995,
  title={A {Cauchy-Schwarz} inequality for operators with applications},
  author={Bhatia, R. and Davis, C.},
  journal=j-LAA,
  volume={223},
  pages={119-129},
  year={1995},
}

@book{bhat:1996,
   author = "R. Bhatia",
   year   = "1996",
   title = "Matrix Analysis",
   series = "Graduate Texts in Mathematics, vol. 169",
   publisher = "Springer",
   address = "New York",
}

@ARTICLE{bhkl:1998,
   author = "R. Bhatia and F. Kittaneh and R.-C. Li",
   year = "1998",
   title = "Eigenvalues of Symmetrizable Matrices",
   journal = j-BIT,
   volume = "38",
   number = "1",
   pages = "1-11",
}

@ARTICLE{daka:1970,
   author = "C. Davis and W. Kahan",
   title = "The Rotation of Eigenvectors by a Perturbation.~{III}",
   journal = j-SINUM,
   year = "1970",
   volume = "7",
   pages = "1-46",
}

@book{demm:1997,
     author = "J. Demmel",
     year      = "1997",
     title  = "Applied Numerical Linear Algebra",
     publisher = "SIAM",
     address   = "Philadelphia, PA",
}

@book{govl:2013,
   author = "G. H. Golub and C. F. {Van Loan}",
   year = "2013",
   title = "Matrix Computations",
   edition = "4th",
   publisher = "Johns Hopkins University Press",
   address = "Baltimore, Maryland",
}

@Inbook{heil:2019,
author="Heil, C.",
title="Absolute Continuity and the Fundamental Theorem of Calculus. In: Introduction to Real Analysis. Graduate Texts in Mathematics, vol 280",
bookTitle="Introduction to Real Analysis",
year="2019",
publisher="Springer",
address="Cham",
pages="219--252",
isbn="978-3-030-26903-6",
doi="10.1007/978-3-030-26903-6_6",
url="https://doi.org/10.1007/978-3-030-26903-6_6"
}

@ARTICLE{grlo:2020,
  author =       {A. Greenbaum and R.-C. Li and M. L. Overton},
  title =        {First-order Perturbation Theory for Eigenvalues and Eigenvectors},
  journal =      j-SIREV,
  year =         {2020},
  volume =       {62},
  number =       {2},
  pages =        {463-482},
}

@ARTICLE{howi:1953,
   author = "A. J. Hoffman and H. W. Wielandt",
   year = "1953",
   title = "The Variation of the Spectrum of a Normal Matrix",
   journal = j-DMJ,
   volume = "20",
   pages = "37-39",
}

@book{kato:1995,
   author = "T. Kato",
   year = "1995",
   title = "Perturbation Theory for Linear Operators",
   edition = "reprint of the 1980 edition",
   publisher = "Springer-Verlag",
   address = "Berlin",
}

@ARTICLE{lili:2005,
  AUTHOR =       "C.-K. Li and R.-C. Li",
  TITLE =        "A Note on Eigenvalues of Perturbed {Hermitian} Matrices",
  JOURNAL =      j-LAA,
  YEAR =         "2005",
  volume =       "395",
  pages =        "183-190",
  note = "There is a corrected arXiv version (2025)",
}

@misc{lili:2025arXiv,
  author =       "C-K. Li and R.-C. Li",
  TITLE =        "A Note on Eigenvalues of Perturbed {Hermitian} Matrices",
      year={August 2025},
      note={the corrected arXiv version of \cite{lili:2005}: {\tt arXiv:2508.08203} (2025)},
}

@INCOLLECTION{li:2014HLA,
  AUTHOR =       "R.-C. Li",
  TITLE =        "Matrix Perturbation Theory",
  BOOKTITLE =    "Handbook of Linear Algebra",
  PUBLISHER =    "CRC Press",
  YEAR =         "2014",
  editor =       "L. Hogben and R. Brualdi and G. W. Stewart",
  edition =      {2nd},
  volume =       "",
  number =       "",
  series =       "",
  pages =        "Chapter 21",
  address =      "Boca Raton, FL",
}

@ARTICLE{math:1998,
   author = "R. Mathias",
   year   = "1998",
   title = "Quadratic Residual Bounds for the {Hermitian} Eigenvalue Problem",
   journal = j-SIMAX,
   volume = "19",
   pages = "541-550",
}

@book{parl:1998,
   author = "B. N. Parlett",
   year = "1998",
   title = "The Symmetric Eigenvalue Problem",
   publisher = "SIAM",
   address = "Philadelphia",
   note = "This SIAM edition is an unabridged, corrected reproduction
           of the work first published by Prentice-Hall, Inc.,
           Englewood Cliffs, New Jersey, 1980.",
}

@ARTICLE{part:1978,
  AUTHOR =       "K. R. Parthasarathy",
  TITLE =        "Eigenvalues of Matrix-valued Analytic Maps",
  JOURNAL =      j-JAuMS-A,
  YEAR =         "1978",
  volume =       "26",
  number =       "",
  pages =        "179-197",
}

@book{stsu:1990,
   author = "G. W. Stewart and Ji-Guang Sun",
   year = "1990",
   title = "Matrix Perturbation Theory",
   publisher = "Academic Press",
   address = "Boston",
}

@ARTICLE{neum:1937,
  author =       {J. {von Neumann}},
  title =        {Some matrix-inequalities and metrization of matrix-space},
  journal =      {Tomsk. Univ. Rev.},
  year =         {1937},
  volume =       {1},
  number =       {},
  pages =        {286-300},
}
}

\end{document}